%
%
%
\documentclass{amsart}

\usepackage{amssymb}

\newtheorem{theorem}{Theorem}[section]
\newtheorem{lemma}[theorem]{Lemma}

\theoremstyle{definition}
\newtheorem{definition}[theorem]{Definition}

\theoremstyle{remark}

\numberwithin{equation}{section}



\begin{document}

\title[Whittaker-Hill] {Instability intervals of the Whittaker-Hill operator}

\author{Xu-Dan Luo}
\address{Academy of Mathematics and Systems Science, Chinese Academy of Sciences, Beijing 100190, China.}
\email{lxd@amss.ac.cn}

\subjclass[2010]{Primary : 34L15; 41A60; 47E05.
}

\keywords{The Whittaker-Hill operator; Instability intervals; Asymptotics}

\begin{abstract}
The Hill operator admits a band gap structure. As a special case, like the Mathieu operator, one has only open gaps,
however, the instability intervals of the Whittaker-Hill operator may be open or closed. In 2007, P. Djakov and B. Mityagin gave the asymptotics of band gaps for a special Whittaker-Hill operator [P. Djakov and B. Mityagin, J. Funct. Anal., 242, 157-194 (2007).]. In this paper, a more general Whittaker-Hill operator is considered and the asymptotics of the instability intervals are studied.
\end{abstract}

\maketitle



\section{Introduction and main results}

The Floquet (Bloch) theory indicates that the spectrum of the Schr\"{o}dinger operator
\begin{equation}
\label{E:Schrodinger operator}
Lf:=-f''(x)+\nu(x)f(x), \ \ \ x\in\mathbb{R}
\end{equation}
with a smooth real-valued periodic potential $\nu(x)$ has a band gap structure. If we further assume that $\nu(x)$ is of periodic $\pi$ and set
\begin{equation*}
\nu(x)=-\sum_{n=1}^{\infty}\theta_{n}\cos(2nx)-\sum_{m=1}^{\infty}\phi_{m}\sin (2mx),
\end{equation*}
where $\theta_{n}$ and $\phi_{m}$ are real, then (\ref{E:Schrodinger operator}) can be written as:
\begin{equation}
Lf:=-f''(x)-\left[\sum_{n=1}^{\infty}\theta_{n}\cos(2nx)+\sum_{m=1}^{\infty}\phi_{m}\sin (2mx)\right]f(x).
\end{equation}
Moreover, there are \cite{Eastham} two monotonically increasing infinite sequence of real numbers
\begin{equation*}
\lambda_{0}^{+}, \ \lambda_{1}^{+}, \ \lambda_{2}^{+}, \cdots
\end{equation*}
and
\begin{equation*}
\lambda_{1}^{-}, \ \lambda_{2}^{-}, \ \lambda_{3}^{-}, \cdots
\end{equation*}
such that the Hill equation
\begin{equation}
Lf=\lambda f
\end{equation}
has a solution of period $\pi$ if and only if $\lambda=\lambda_{n}^{+}$, $n=0, 1, 2, \cdots$, and a solution of semi-period $\pi$ (i.e., $f(x+\pi)=-f(x)$) if and only if $\lambda=\lambda_{n}^{-}$, $n=1, 2, 3, \cdots$. The $\lambda_{n}^{+}$ and $\lambda_{n}^{-}$ satisfy the inequalities
\begin{equation*}
\lambda_{0}^{+}<\lambda_{1}^{-}\leq \lambda_{2}^{-}\leq \lambda_{1}^{+}\leq \lambda_{2}^{+}<\lambda_{3}^{-}\leq \lambda_{4}^{-}<\lambda_{3}^{+}\leq \lambda_{4}^{+}<\cdots
\end{equation*}
and the relations
\begin{equation*}
\lim_{n\rightarrow\infty}\lambda_{n}^{+}=\infty, \ \ \ \lim_{n\rightarrow\infty}\lambda_{n}^{-}=\infty.
\end{equation*}
Besides, $\gamma_{n}:=(\lambda_{n+1}^{-}-\lambda_{n}^{-})$ for odd $n$ and $\gamma_{n}:=(\lambda_{n}^{+}-\lambda_{n-1}^{+})$ for even $n$ are referred to as band gaps or instability intervals,
where $n\geq 1$.

It is well-known that there is an extensive theory for the Mathieu operator, where the potential $\nu(x)$ is a single trigonometric function, i.e.,
\begin{equation}
\nu(x)=-B \cos 2x.
\end{equation}

Ince \cite{Ince 3} proved that all instability intervals of the Mathieu operator are open, i.e., no closed gaps for the Mathieu operator. In 1963, Levy and Keller \cite{Levy} gave the asymptotics of $\gamma_{n}=\gamma_{n}(B)$, i.e., for fixed $n$ and real nonzero number $B$, when $B \to 0$,
\begin{equation}
\gamma_{n}=\frac{8}{[(n-1)!]^{2}}\cdot \left(\frac{B}{8}\right)^{n} (1+O(B)).
\end{equation}
18 years later, Harrell \cite{Harrell} gave the asymptotics of the band gaps of the Mathieu operator for fixed $B$ and $n\rightarrow\infty$, i.e.,
\begin{equation}
\gamma_{n}=\lambda_{n}^{+}-\lambda_{n}^{-}=\frac{8}{[(n-1)!]^{2}}\cdot \left(\frac{|B|}{8}\right)^{n}\left(1+O\left(\frac{1}{n^{2}}\right)\right).
\end{equation}

Compared with the Mathieu potential, the band gaps for the Whittaker-Hill potential
\begin{equation}
\label{E:Whittaker-Hill potential}
\nu(x)=-(B\cos 2x+C\cos 4x)
\end{equation}
may be open or closed.
Specifically, if $B=4\alpha t$ and $C=2\alpha^{2}$, for any real $\alpha$ and natural number $t$, it is already known that for odd $t=2m+1$, all the even gaps are closed except the first $m$, but no odd gap disappears; similarly, for even $t=2m$, except for the first $m$, all the odd gaps are closed, but even gaps remain open (see Theorem 11, \cite{Djakov 1} and Theorem 7.9, \cite{Maguns}).

In 2007, P. Djakov and B. Mityagin (see \cite{Djakov 2}) gave the asymptotics of the instability intervals for the above special Whittaker-Hill potential, namely, for real $B, C\neq 0$, $B=4 \alpha t$ and $C=2 \alpha^{2}$, they have the following results, where either both $\alpha$ and $t$ are real numbers if $C>0$ or both $\alpha$ and $t$ are pure imaginary numbers if $C<0$.
\begin{theorem}[\cite{Djakov 2}]
\label{T:Djakov 1}
Let $\gamma_{n}$ be the $n-$th band gap of the Whittaker-Hill operator
\begin{equation}
Lf=-f''-[4\alpha t \cos 2x+ 2\alpha^{2} \cos 4x]f,
\end{equation}
where either both $\alpha$ and $t$ are real, or both are pure imaginary numbers. If $t$ is fixed and $\alpha\rightarrow 0$, then for even $n$
\begin{equation}
\gamma_{n}=\left|\frac{8\alpha^{n}}{2^{n}[(n-1)!]^{2}}\prod_{k=1}^{n/2}(t^{2}-(2k-1)^{2})\right|(1+O(\alpha)),
\end{equation}
and for odd $n$
\begin{equation}
\gamma_{n}=\left|\frac{8\alpha^{n}t}{2^{n}[(n-1)!]^{2}}\prod_{k=1}^{(n-1)/2}(t^{2}-(2k)^{2})\right|(1+O(\alpha)).
\end{equation}
\end{theorem}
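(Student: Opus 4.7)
The plan is to exploit a gauge transformation that reduces the Whittaker-Hill operator to a three-term recurrence, then apply degenerate perturbation theory at the unperturbed double eigenvalue $n^{2}$ and identify the leading chain in closed form. The starting point is the algebraic identity
\[
-4\alpha t\cos 2x - 2\alpha^{2}\cos 4x \;=\; t^{2} + 2\alpha^{2} - (t + 2\alpha\cos 2x)^{2},
\]
which motivates the substitution $f(x) = e^{\alpha\cos 2x}g(x)$. With $\phi(x) = \alpha\cos 2x$, one has $(\phi')^{2} = 2\alpha^{2}(1-\cos 4x)$ and $\phi'' = -4\alpha\cos 2x$, and a direct calculation shows that the $\cos 4x$ contribution cancels exactly, so that $Lf = \lambda f$ becomes
\[
-g'' + 4\alpha\sin 2x\,g' + 4\alpha(1-t)\cos 2x\,g \;=\; (\lambda + 2\alpha^{2})\,g.
\]
The weight $e^{\alpha\cos 2x}$ is smooth, strictly positive and $\pi$-periodic, so this similarity preserves the Floquet spectrum together with its periodic/antiperiodic decomposition.

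I would next expand $g$ in $e^{2ikx}$ for the periodic (even-$n$) sector and in $e^{i(2k+1)x}$ for the antiperiodic (odd-$n$) sector. In both cases the transformed equation becomes a three-term non-symmetric Jacobi recurrence; in the even case,
\[
(4k^{2}-\mu)\,g_{k} \;=\; 2\alpha(2k+1+t)\,g_{k+1} - 2\alpha(2k-1-t)\,g_{k-1},\qquad \mu:=\lambda+2\alpha^{2},
\]
and in the odd case the analogous recurrence has diagonal $(2k+1)^{2}$ with hoppings $2\alpha(2k-t)$ and $-2\alpha(2k+2+t)$. The $n$-th gap is then $\gamma_{n} = 2|\beta_{n}|\,(1+O(\alpha))$, where $\beta_{n}$ is the effective off-diagonal matrix element between the two unperturbed degenerate modes at indices $\pm\ell$ (even case, $\ell = n/2$) or at $\ell$ and $-\ell-1$ (odd case, $\ell = (n-1)/2$), obtained by Schur-complement elimination of the intermediate Fourier indices.

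Because the recurrence is three-term and each hopping carries a factor $2\alpha$, only the direct chain through every intermediate index contributes at order $\alpha^{n}$; any backtracking chain carries two extra factors of $\alpha$ and loses one nondegenerate energy denominator, and therefore contributes $O(\alpha^{n+1})$. In the even case the direct-chain amplitude is
\[
\prod_{k=-\ell+1}^{\ell} 2\alpha(2k-1-t) \;=\; (2\alpha)^{n}\prod_{m=1}^{n/2}\bigl(t^{2}-(2m-1)^{2}\bigr),
\]
the identity coming from pairing $k$ with $1-k$, so that each such pair collapses to $(2k-1-t)(1-2k-t) = t^{2} - (2k-1)^{2}$; the product of intermediate energy denominators telescopes to $\prod_{k=-\ell+1}^{\ell-1}4(\ell^{2}-k^{2}) = 4^{n-1}[(n-1)!]^{2}$. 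Dividing and doubling yields the even-$n$ formula of the theorem; the odd case is identical after replacing $2k-1-t$ by $2k-t$, the unpaired factor at $k=0$ producing the extra prefactor $t$.

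The main obstacle is the uniform-in-$n$ control of the error, i.e., showing that the single-chain contribution captures the full $\alpha^{n}$ asymptotic up to a multiplicative error of $1 + O(\alpha)$. This falls under the Lyapunov-Schmidt / Riesz-projection framework of Djakov and Mityagin: the non-symmetric Jacobi operator is similar, via the explicit diagonal weight built from ratios of consecutive hoppings, to a symmetric one whose resolvent has matrix elements decaying quadratically in the index while the off-diagonal entries grow only linearly in $k$. These bounds give geometric control of the tail of the perturbative series, justifying the truncation at the leading chain.
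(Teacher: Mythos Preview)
Your approach is correct and takes a genuinely different route from the paper. The paper recovers this statement as the special case $m_{1}=1$, $m_{2}=2$, $b=-4\alpha t$, $c=-2\alpha^{2}$ of Theorem~\ref{T:1}, Case~3. There the computation stays in the Fourier basis of the \emph{original} Whittaker--Hill operator, so the recurrence has five terms (steps $\pm2,\pm4$) and the leading contribution to $\beta_{n}$ is a \emph{sum} over all monotone walks from $-n$ to $n$ with steps $2$ or $4$. That sum is recognised as $P_{n}(b/2)\cdot q^{m_{1}n}/\bigl(4^{n-1}[(n-1)!]^{2}\bigr)$ for a monic polynomial $P_{n}$ of degree $n$, but the factorisation of $P_{n}$ is not read off from the combinatorics: the paper invokes the closed-gap criterion of Lemma~\ref{L:2} (itself proved via the Ince substitution you use) to locate the roots and only then obtains the product.

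Your gauge $f=e^{\alpha\cos 2x}g$ collapses the recurrence to three terms, so the leading monotone walk is unique and the product $\prod_{k}(t^{2}-(2k-1)^{2})$ (respectively $t\prod_{k}(t^{2}-(2k)^{2})$) falls out directly from the hopping coefficients, bypassing the two-step ``sum then identify roots'' argument entirely. This is cleaner for the Whittaker--Hill potential specifically; the paper's walk expansion, by contrast, applies uniformly to the general two-cosine potential with independent exponents $m_{1},m_{2}$ (Cases~1 and~2 of Theorem~\ref{T:1}), where no Ince reduction is available. Two small points worth tightening: since the gauged Jacobi matrix is non-symmetric, the effective $2\times2$ block has two distinct off-diagonal entries $\beta_{n}^{\pm}$ and the splitting is $2\sqrt{\beta_{n}^{+}\beta_{n}^{-}}$; your formula $\gamma_{n}=2|\beta_{n}|$ is still correct because the parity $x\mapsto -x$ forces $|\beta_{n}^{+}|=|\beta_{n}^{-}|$, but this should be said. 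And your final paragraph on uniform-in-$n$ control is unnecessary here, as the theorem fixes $n$ and sends $\alpha\to 0$; the $O(\alpha)$ constant may depend on $n$, so the tail estimate is elementary and does not require the full Djakov--Mityagin machinery.
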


\begin{theorem}[\cite{Djakov 2}]
\label{T:Djakov 2}
Let $\gamma_{n}$ be the $n-$th band gap of the Whittaker-Hill operator
\begin{equation}
Lf=-f''-[4\alpha t \cos 2x+ 2\alpha^{2} \cos 4x]f,
\end{equation}
where either both $\alpha$ and $t\neq 0$ are real, or both are pure imaginary numbers. Then the following asymptotic formulae hold for fixed $\alpha$, $t$ and $n\rightarrow\infty$:

for even $n$
\begin{equation}
\gamma_{n}=\frac{8|\alpha|^{n}}{2^{n}[(n-2)!!]^{2}}\left|\cos\left(\frac{\pi}{2}t\right)\right|\left[1+O\left(\frac{\log n}{n}\right)\right],
\end{equation}
and for odd $n$
\begin{equation}
\gamma_{n}=\frac{8|\alpha|^{n}}{2^{n}[(n-2)!!]^{2}}\frac{2}{\pi}\left|\sin\left(\frac{\pi}{2}t\right)\right|\left[1+O\left(\frac{\log n}{n}\right)\right],
\end{equation}
where $(2m-1)!!=1\cdot3\cdots(2m-1)$, \ \ \ $(2m)!!=2\cdot4\cdots(2m)$.
\end{theorem}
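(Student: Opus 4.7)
The plan is to work in the Fourier basis $\{e^{ikx}\}_{k\in\mathbb{Z}}$, where $L$ is an infinite matrix with diagonal $L_{kk}=k^2$ and the only nonzero off-diagonal entries $L_{k,k\pm 2}=-2\alpha t$, $L_{k,k\pm 4}=-\alpha^2$. For large $n$, the periodic/antiperiodic eigenvalues $\lambda_n^\pm$ cluster near $n^2$, and they are captured by a $2\times 2$ effective matrix obtained via Schur complement: projecting the equation $(L-\lambda)f=0$ onto $\mathrm{span}\{e^{\pm inx}\}$ and solving for the remaining Fourier coefficients yields a $2\times 2$ matrix $M(\lambda)$ with diagonal entries $n^2+\alpha_n(\lambda)-\lambda$ and off-diagonal element $\beta_n(\lambda)$. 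To leading order the band gap then satisfies $\gamma_n=2|\beta_n(n^2)|\,(1+o(1))$.

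Next, I would expand $\beta_n(\lambda)$ as a Neumann series in the resolvent of the diagonal part, realizing it as a sum over monotone paths from $+n$ to $-n$ in the Fourier lattice: each step contributes $-2\alpha t$ (for a jump of $\pm 2$) or $-\alpha^2$ (for a jump of $\pm 4$), and each internal vertex $k$ contributes a factor $(\lambda-k^2)^{-1}$. The coupling $B=4\alpha t$, $C=2\alpha^2$ is chosen precisely so that every monotone path carries the same power $\alpha^n$. Grouping paths by the number of 4-steps and evaluating the denominators at $\lambda=n^2$, a direct computation shows that the leading contribution is
\[
\beta_n(n^2)\;=\;\frac{4\,\alpha^n}{2^n[(n-1)!]^2}\,P_n(t)\,(1+o(1)),
\]
where $P_n(t)=\prod_{k=1}^{n/2}(t^2-(2k-1)^2)$ for even $n$ and $P_n(t)=t\prod_{k=1}^{(n-1)/2}(t^2-(2k)^2)$ for odd $n$. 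This recovers Theorem \ref{T:Djakov 1} in the $\alpha\to 0$ limit and identifies $P_n(t)$ as the object governing the $t$-dependence.

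To pass from the polynomial $P_n(t)$ to the trigonometric factors $\cos(\pi t/2)$ and $\sin(\pi t/2)$ in the asymptotic regime, I would invoke the Weierstrass product expansions
\[
\cos\frac{\pi t}{2}\;=\;\prod_{k=1}^{\infty}\Bigl(1-\frac{t^2}{(2k-1)^2}\Bigr),\qquad \sin\frac{\pi t}{2}\;=\;\frac{\pi t}{2}\prod_{k=1}^{\infty}\Bigl(1-\frac{t^2}{(2k)^2}\Bigr),
\]
combined with the double-factorial identities $\prod_{k=1}^m(2k-1)^2=[(2m-1)!!]^2$ and $[(2m-1)!]^2=[(2m-1)!!\,(2m-2)!!]^2$. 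Extracting the zeros of $P_n(t)$ from the partial product, one obtains $|P_n(t)|/[(n-1)!]^2=|\cos(\pi t/2)|/[(n-2)!!]^2\cdot(1+O(1/n))$ for even $n$ and $|P_n(t)|/[(n-1)!]^2=\tfrac{2}{\pi}|\sin(\pi t/2)|/[(n-2)!!]^2\cdot(1+O(1/n))$ for odd $n$, uniformly for $t$ in compact sets, precisely matching the prefactor $1/(2^n[(n-2)!!]^2)$ in the statement.

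The main obstacle is justifying the $O(\log n/n)$ error for fixed $\alpha$ and large $n$, as opposed to the small-$\alpha$ regime of Theorem \ref{T:Djakov 1}. Here one cannot truncate the resolvent series at leading order in $\alpha$; one must bound the non-monotone paths and the corrections arising from evaluating $\alpha_n(\lambda)$ at $\lambda=\lambda_n^\pm$ rather than at $n^2$. Each non-monotone detour inserts an additional resolvent factor $\alpha/|n^2-k^2|$ at some mode $k\neq\pm n$, so the total contribution is controlled by a tail sum of the form $\sum_{|k-n|>N}1/|n^2-k^2|=O((\log N)/n)$; optimizing the cut-off $N$ produces the logarithmic correction. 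Combined with the $O(1/n)$ error from the tail of the Weierstrass product, this yields the stated $O(\log n/n)$ error bound. The technical heart of the argument is this uniform resolvent estimate, which requires careful separation of near-diagonal singular contributions from the regular background.
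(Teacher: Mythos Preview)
Your outline follows the same route as the paper (which itself follows Djakov--Mityagin): reduce $\gamma_n$ to $2|\beta_n(z_n)|$ via the $2\times 2$ reduction, write $\beta_n$ as a sum over lattice walks from $-n$ to $n$ with steps $\pm 2,\pm 4$, isolate the monotone (positive-step) walks $X^+$ as the leading part, evaluate that part as the product $P_n(t)$ of Theorem~\ref{T:Djakov 1}, and then pass from the finite product to $\cos(\pi t/2)$ or $\sin(\pi t/2)$ via the Weierstrass factorization.

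There is, however, one genuine gap in your error analysis. The bound you describe on the non-monotone walks controls $\sum_{x\in X\setminus X^+}|B_n(x,z)|$ by $(K\log n/n)\sum_{\xi\in X^+}|B_n(\xi,0)|$; this is exactly the paper's Lemma~\ref{L:5}. But the quantity on which you need a \emph{relative} error is the signed sum $\sigma_n=\sum_{\xi\in X^+}B_n(\xi,0)$, and since $V(\pm 2)=-2\alpha t$ and $V(\pm 4)=-\alpha^2$ contribute with varying signs there is cancellation: $|\sigma_n|$ can be much smaller than $\sum_{\xi}|B_n(\xi,0)|$. The paper closes this by splitting $\sigma_n=\sigma_n^++\sigma_n^-$ and showing that, for even $n$,
\[
\frac{\sigma_n^{+}+|\sigma_n^{-}|}{|\sigma_n|}
=\frac{\prod_{k=1}^{m}\bigl(1+\tfrac{t^2}{(2k-1)^2}\bigr)}{\prod_{k=1}^{m}\bigl|1-\tfrac{t^2}{(2k-1)^2}\bigr|}
\le\Bigl|\frac{\cosh(\pi t/2)}{\cos(\pi t/2)}\Bigr|,
\]
with the analogous $\sinh/\sin$ bound for odd $n$. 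This uniform-in-$n$ bound is what converts the absolute estimate into the stated $O(\log n/n)$ relative error; your sketch does not supply it.

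A secondary difference: you obtain $P_n(t)$ by ``direct computation'' over monotone paths. The paper instead argues structurally that $P_n$ is an even (resp.\ odd) monic polynomial in $b/2$, and then pins down its roots via the Ince-equation transformation and the Magnus--Winkler/Volkmer coexistence results (Lemma~\ref{L:2}), which identify precisely the values of $t$ at which the gap closes. Your combinatorial route is the one used in the original Djakov--Mityagin paper and is valid, but it is a nontrivial identity that would need to be carried out rather than asserted.
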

In this paper, a more general Whittaker-Hill operator
\begin{equation}
L=-D^{2}+(bq^{m_{1}}\cos 2x+cq^{m_{2}}\cos 4x)
\end{equation}
is considered and the asymptotics of the instability intervals are derived, where $b$, $c$, $q$, $m_{1}$ and $m_{2}$ are real. Our theorems are stated as follows, in particular, we can deduce P. Djakov and B. Mityagin's results by choosing $m_{1}=1$, $m_{2}=2$, $b=-4\alpha t$ and $c=-2\alpha^{2}$.
\begin{theorem}
\label{T:1}
Let the Whittaker-Hill operator be
\begin{equation}
Ly=-y''+(bq^{m_{1}}\cos 2x+cq^{m_{2}}\cos 4x)y,
\end{equation}
where $b$, $c$ and $q$ are real. If $q\rightarrow 0$ and $m_{1}$, $m_{2}$ are positive real parameters, then one of the following results holds:

\begin{enumerate}
\item
when $m_{1}> \frac{m_{2}}{2}$,
\\
(i)
\begin{equation}
\gamma_{2m}=\left|\frac{32\cdot(\frac{c}{2})^{m}\cdot q^{m_{2}m}}{2^{4m}[(m-1)!]^{2}}\right|+O\Big(q^{m_{2}(m-\frac{1}{2})+m_{1}}\Big),
\end{equation}
\\
(ii)
\begin{equation}
\gamma_{1}= |bq^{m_{1}}|+O(q^{2m_{1}-\frac{m_{2}}{2}}),          \ \ \ \gamma_{3}=\frac{|bcq^{m_{1}+m_{2}}|}{8}+O(q^{2m_{1}+\frac{m_{2}}{2}}),
\end{equation}
\begin{equation}
\begin{split}
&\gamma_{2m-1}= \Big|\left(\frac{c}{2}\right)^{m-1}\cdot b\cdot q^{m_{2}(m-1)+m_{1}}\cdot \frac{8}{2^{3m}}\cdot
\Big\{ \frac{1}{[(2m-3)!!]^{2}}\\
&\cdot \sum_{i=1}^{m-2} \frac{(2m-2i-3)!!\cdot (2i-1)!!}{i!\cdot (m-1-i)!} + \frac{2}{(2m-3)!!\cdot(m-1)!} \Big\}\Big|+O\Big(q^{m_{2}(m-\frac{3}{2})+2m_{1}}\Big)
\ \ \ \text{for} \ \ \ m\geq 3;
\end{split}
\end{equation}
\item
when $m_{1}< \frac{m_{2}}{2}$,
\begin{equation}
\gamma_{1}=\left|bq^{m_{1}}\right|+O(q^{m_{2}-m_{1}}), \ \ \ \gamma_{2}=\left|cq^{m_{2}}+\frac{b^{2}q^{2m_{1}}}{8}\right|+O(q^{m_{2}}),
\end{equation}
\begin{equation}
\gamma_{n}=\left|\frac{8\cdot b^{n}\cdot q^{m_{1}n}}{2^{3n}\cdot [(n-1)!]^{2}}\right|+O(q^{m_{1}n+m_{2}-2m_{1}})
\ \ \ \ \ \ \ \text{for} \ \ \ n\geq 3;
\end{equation}
\item
when $m_{1}= \frac{m_{2}}{2}$ and $c<0$,
\\
(i)
\begin{equation}
\gamma_{1}=\left|bq^{m_{1}}\right|+O(q^{3m_{1}}), \ \ \ \gamma_{2}=\left|cq^{m_{2}}+\frac{b^{2}q^{2m_{1}}}{8}\right|+O(q^{4m_{1}}),
\end{equation}
\\
(ii)
\begin{equation}
\gamma_{2m}=8\left|\frac{\prod_{k=1}^{m}\Big(\Big(\frac{b}{2}\Big)^{2}+8c\Big(k-\frac{1}{2}\Big)^{2}\Big)\cdot q^{2m_{1}\cdot m}}{2^{4m}\cdot[(2m-1)!]^{2}}\right|+O(q^{2m_{1}(m+1)}) \ \ \ \ \ \ \ \text{for} \ \ \ m\geq 2,
\end{equation}
\\
(iii)
\begin{equation}
\gamma_{2m-1}=32\left|\frac{\frac{b}{2}\prod_{k=1}^{m-1}\Big(\Big(\frac{b}{2}\Big)^{2}+8ck^{2}\Big)\cdot q^{m_{1}\cdot (2m-1)}}{2^{4m}\cdot[(2m-2)!]^{2}}\right|+O(q^{m_{1}(2m+1)})  \ \ \ \ \ \ \ \text{for} \ \ \ m\geq 2.
\end{equation}
\end{enumerate}
Here, $m$ is a positive integer and $\gamma_{n}$ is the n-th instability interval.
\end{theorem}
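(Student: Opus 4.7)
The plan is to reduce the Floquet eigenvalue problem to an infinite five-term recurrence on Fourier coefficients, localize near the unperturbed level $\lambda \approx n^2$, and extract the leading asymptotics as $q \to 0$ by carefully tracking powers of $q^{m_1}$ and $q^{m_2}$.

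First I would set up Fourier series for the periodic ($\lambda_n^+$) and semi-periodic ($\lambda_n^-$) eigenfunctions. For even $n$ one uses $\sum a_k \cos(2kx)$ or $\sum b_k \sin(2kx)$; for odd $n$ one uses $\sum a_k \cos((2k+1)x)$ or $\sum b_k \sin((2k+1)x)$. Substituting into $Ly = \lambda y$ and applying product-to-sum formulas produces a five-term recurrence
\begin{equation*}
(\lambda - k^2)\, c_k \;=\; \tfrac{b q^{m_1}}{2}(c_{k-2} + c_{k+2}) \;+\; \tfrac{c q^{m_2}}{2}(c_{k-4} + c_{k+4}),
\end{equation*}
with appropriate modifications at small $k$ that distinguish the cosine and sine cases.

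Next, since $\lambda_n^\pm \to n^2$ as $q \to 0$, I would single out $c_n$ and solve for the remaining $c_k$ in terms of it by Neumann iteration, using the resolvent factor $(\lambda - k^2)^{-1}$ for $k \neq n$. Re-inserting these into the $k = n$ equation yields a scalar characteristic equation $\lambda - n^2 = R_n^{\pm}(\lambda, q)$, where the superscript records the boundary adjustments at the left of the recurrence. The gap is
\begin{equation*}
\gamma_n \;=\; \lambda_n^+ - \lambda_n^- \;=\; R_n^{+}(\lambda_n^+, q) - R_n^{-}(\lambda_n^-, q),
\end{equation*}
and only those Neumann paths in index space that touch the boundary contribute to this difference; all the "interior" paths cancel between $+$ and $-$. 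The three cases of the theorem then arise from the competition between $q^{m_1}$ and $q^{m_2}$. When $m_1 > m_2/2$, the $\cos 4x$ term dominates: for even $n = 2m$ the shortest path $0 \to 4 \to 8 \to \cdots \to 2m$ of length $m$ gives $q^{m_2 m}$; for odd $n = 2m-1$ one must insert exactly one $\pm 2$ hop to change parity, producing the factor $b q^{m_1} \cdot c^{m-1} q^{m_2(m-1)}$, and the combinatorial sum in the statement counts the allowed positions of that hop. When $m_1 < m_2/2$, the $\cos 2x$ term dominates and one recovers the Mathieu-type asymptotics $\sim (b q^{m_1})^n$, with the $\cos 4x$ term entering only through the error at order $q^{m_2 - 2 m_1}$. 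When $m_1 = m_2/2$, both parameters scale together; the product $\prod_k\!\big((b/2)^2 + 8 c (k-1/2)^2\big)$ emerges naturally as the determinant of the leading truncated tridiagonal block after rescaling $c \mapsto c q^{2m_1}$, mirroring the Ince-polynomial structure of the classical Whittaker--Hill operator (so Djakov--Mityagin's Theorem~\ref{T:Djakov 1} falls out at $m_1 = 1$, $m_2 = 2$).

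The main obstacle will be the combinatorial bookkeeping in case (1)(ii): the finite sum $\sum_{i=1}^{m-2}\frac{(2m-2i-3)!!(2i-1)!!}{i!(m-1-i)!}$ has to be identified by enumerating all admissible index paths on $\{1,3,5,\ldots,2m-1\}$ that use exactly one $\pm 2$-step and $m-1$ steps of size $\pm 4$, while simultaneously verifying that all paths not reaching the boundary cancel between $\lambda_n^+$ and $\lambda_n^-$. Uniform control of the error requires showing that the Neumann series converges geometrically in $q$ with constants independent of the specific path, and in case (3) the factorization of the leading block as a product over $k = 1,\dots,m$ needs an orthogonality (or direct diagonalization) argument to cleanly separate $\gamma_{2m}$ from $\gamma_{2m-1}$.
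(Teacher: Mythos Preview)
Your overall strategy---expand in Fourier modes, iterate the resolvent, and read off the leading $q$-power from the shortest admissible index paths---is exactly what the paper does, although the paper packages the first two steps by quoting Djakov--Mityagin's lemma (Lemma~\ref{L:1} here), which already gives $\gamma_n\approx 2|\beta_n(z)|$ with $\beta_n$ a sum over walks from $-n$ to $n$ with steps $\pm2,\pm4$. Your ``Neumann paths touching the boundary'' in the $\cos/\sin$ basis are the reflected version of those walks in the $e^{ikx}$ basis, so Cases~1 and~2 go through as you describe: the combinatorial sum in (1)(ii) is precisely the enumeration of monotone walks with one step of length~$2$ and $m-1$ steps of length~$4$, and the paper computes it the same way.

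The one place your plan diverges from the paper is Case~3, and there the paper's argument is worth knowing because it sidesteps the obstacle you flag. You propose to obtain $\prod_{k}\big((b/2)^2+8c(k-\tfrac12)^2\big)$ by a direct determinant or diagonalization of the truncated block; the paper instead argues indirectly. It first observes that the leading coefficient $P_n(b/2)$ is a monic polynomial in $b/2$, even for $n=2m$ and odd for $n=2m-1$ (from the parity of the number of $2$-steps), hence factors as $\prod_k\big((b/2)^2-x_k\big)$ or $\tfrac{b}{2}\prod_k\big((b/2)^2-y_k\big)$ with unknown $x_k,y_k$. Then it invokes Lemma~\ref{L:2} (proved via the Ince-equation transformation and Volkmer's coexistence criterion): for $c<0$, the $2m$-th gap closes exactly when $(b/2)^2=-8c(k-\tfrac12)^2$ for some $k\le m$, and the $(2m-1)$-st gap closes exactly when $(b/2)^2=-8ck^2$ for some $k\le m-1$. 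Since $\gamma_n=0$ forces $P_n(b/2)=0$ at leading order, these gap-closing values pin down all the roots $x_k,y_k$, and the factorization drops out without any matrix computation. This is both shorter and explains structurally why the hypothesis $c<0$ appears in Case~3.
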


\begin{theorem}
\label{T:2}
Let the Whittaker-Hill operator be
\begin{equation}
Ly=-y''+(bq^{m_{1}}\cos 2x+cq^{2m_{1}}\cos 4x)y,
\end{equation}
where $b$, $q$ are real, $c<0$ and $m_{1}>0$. Then the following asymptotic formula holds for fixed $b$, $c$, $q$ and $n\rightarrow\infty$.
\begin{equation}
\gamma_{2m}=\frac{q^{2m_{1}\cdot m}\cdot |c|^{m}}{2^{3m-3}\cdot[(2m-2)!!]^{2}}\cdot \left|\cos\left(\frac{b\pi}{4\sqrt{-2c}}\right)\right|\cdot\left[1+O\left(\frac{\log m}{m}\right)\right],
\end{equation}
\begin{equation}
\gamma_{2m-1}=\frac{q^{m_{1}(2m-1)}\cdot |c|^{m-1}\cdot\sqrt{-2c}}{2^{3m-5}\cdot[(2m-3)!!]^{2}\cdot \pi}
\cdot \left|\sin\left(\frac{b\pi}{4\sqrt{-2c}}\right)\right|\cdot\left[1+O\left(\frac{\log m}{m}\right)\right].
\end{equation}
Here, $m$ is a positive integer and $\gamma_{n}$ is the n-th instability interval.
\end{theorem}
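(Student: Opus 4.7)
The plan is to observe that Theorem~\ref{T:2} is precisely the critical branch $m_{2}=2m_{1}$ of the general Whittaker--Hill potential, and that a single rescaling of parameters turns it into the Djakov--Mityagin operator of Theorem~\ref{T:Djakov 2}. Concretely, since $c<0$, I would set
$$\alpha = \sqrt{-c/2}\;q^{m_{1}}, \qquad t = -\frac{b}{2\sqrt{-2c}},$$
both of which are fixed real numbers once $b,c,q,m_{1}$ are fixed. A direct computation (using $\alpha^{2}=-(c/2)q^{2m_{1}}$ and $\alpha t=-bq^{m_{1}}/4$) gives
$$-(4\alpha t\cos 2x + 2\alpha^{2}\cos 4x) = bq^{m_{1}}\cos 2x + cq^{2m_{1}}\cos 4x,$$
so the operator in Theorem~\ref{T:2} coincides identically with the one in Theorem~\ref{T:Djakov 2} for these values of $\alpha$ and $t$. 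Because $b,c,q$ are real with $c<0$, both $\alpha$ and $t$ lie in the real branch of the hypotheses of Theorem~\ref{T:Djakov 2}, so its conclusion may be invoked verbatim.

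The remaining step is purely algebraic: insert these values into Theorem~\ref{T:Djakov 2} and simplify. Using
$$|\alpha|^{2m} = \left(\frac{|c|}{2}\right)^{m}q^{2m_{1}m}, \qquad |\alpha|^{2m-1} = \frac{|c|^{m-1}\sqrt{-2c}}{2^{m}}\,q^{m_{1}(2m-1)},$$
together with the identity $\pi t/2 = -b\pi/(4\sqrt{-2c})$ (so that $|\cos(\pi t/2)|$ and $|\sin(\pi t/2)|$ become the trigonometric factors appearing in Theorem~\ref{T:2}), the various powers of $2$ coming from $|\alpha|^{n}$ and the denominator $2^{n}$ in Theorem~\ref{T:Djakov 2} combine into exactly $2^{3m-3}$ for the even gap and $2^{3m-5}\pi$ for the odd gap, while the double factorials $(n-2)!!$ match on the nose. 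The error $1+O(\log m/m)$ is inherited directly from Theorem~\ref{T:Djakov 2}.

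I do not foresee any real obstacle: the whole argument amounts to a change of variable on top of Theorem~\ref{T:Djakov 2}. The only points requiring a little care are verifying the hypotheses of Theorem~\ref{T:Djakov 2} under the reparameterization (the sign condition $c<0$ is exactly what forces $\alpha$ and $t$ into the real branch rather than the purely imaginary one) and carrying out the clean bookkeeping of powers of $2$ and of the double factorials. When $b=0$ the parameter $t$ vanishes, so the sine factor is identically zero; the odd-gap formula then degenerates into a bound rather than a genuine first-order asymptotic, but the stated equality still holds modulo the declared error term.
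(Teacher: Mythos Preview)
Your reduction is correct: the substitution $\alpha=\sqrt{-c/2}\,q^{m_{1}}$, $t=-b/(2\sqrt{-2c})$ identifies the operator of Theorem~\ref{T:2} with that of Theorem~\ref{T:Djakov 2}, and the algebraic check that the constants and double factorials line up is accurate. This is a genuinely different route from the paper's own proof.

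The paper does \emph{not} invoke Theorem~\ref{T:Djakov 2} as a black box. Instead it re-runs the Djakov--Mityagin machinery directly: it uses Lemma~\ref{L:1} to write $\gamma_{n}\sim 2\bigl|\sum_{x\in X}B_{n}(x,z)\bigr|$, decomposes the walk set as $X=X^{+}\cup\bigcup_{\xi\in X^{+}}X'_{\xi}$, controls the non-monotone part via Lemmas~\ref{L:3}--\ref{L:5}, and evaluates $\sigma_{n}=\sum_{\xi\in X^{+}}B_{n}(\xi,0)$ using the product formulas $\prod_{k}\bigl((b/2)^{2}+8c(k-\tfrac12)^{2}\bigr)$ and $\prod_{k}\bigl((b/2)^{2}+8ck^{2}\bigr)$ already obtained in Case~3 of the proof of Theorem~\ref{T:1}. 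The final step replaces the finite products by $\cos$ and $\sin$ via the Euler products. Your shortcut makes transparent that Theorem~\ref{T:2} has no content beyond Theorem~\ref{T:Djakov 2}; the paper's longer argument is self-contained, reuses the lemmas that were in any case needed for Theorem~\ref{T:1}, and does not rely on the hypothesis $t\neq 0$ in Theorem~\ref{T:Djakov 2}. On that last point: your handling of $b=0$ is the one soft spot, since then $t=0$ falls outside the stated hypotheses of Theorem~\ref{T:Djakov 2} and you cannot literally quote it; a one-line separate argument (the potential is then $\pi/2$-periodic, all odd gaps close, and the even gaps reduce to a Mathieu computation) would close this.
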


\section{Some lemmas}
\begin{lemma}[\cite{Djakov 2}]
\label{L:1}
Let the Schr\"{o}dinger operator
\begin{equation}
Ly=-y''+v(x)y
\end{equation}
be defined on $\mathbb{R}$, with a real-valued periodic $L^{2}([0, \pi])$-potential $v(x)$, where $v(x)=\sum_{m\in \mathbb{Z}} V(m)\exp(imx)$, $V(m)=0$ for odd $m$, then $\|v\|^{2}=\sum|V(m)|^{2}$.

(a) If $\|v\|<\frac{1}{9}$, then for each $n=1,2,\cdots$, there exists $z=z_{n}$ such that
~\\$|z|\leq 4\|v\|$, and
\begin{equation}
\label{E:length estimation}
2|\beta_{n}(z)|(1-3\|v\|^{2}/n^{2})\leq \gamma_{n}\leq 2|\beta_{n}(z)|(1+3\|v\|^{2}/n^{2}),
\end{equation}
where
\begin{equation}
\beta_{n}(z)=V(2n)+\sum_{k=1}^{\infty}\sum_{j_{1},\cdots,j_{k}\neq \pm n}\frac{V(n-j_{1})V(j_{1}-j_{2})\cdots V(j_{k-1}-j_{k})V(j_{k}+n)}
{(n^{2}-j_{1}^{2}+z)\cdots(n^{2}-j_{k}^{2}+z)}
\end{equation}
converges absolutely and uniformly for $|z|\leq 1$, and $\gamma_{n}$ is the n-th instability interval.

(b) If $V(0)=\frac{1}{\pi}\int_{0}^{\pi}v(x)dx=0$, then there is $N_{0}=N_{0}(v)$ such that (\ref{E:length estimation}) holds for $n\geq N_{0}$ with $z=z_{n}$, $|z_{n}|<1$.

\end{lemma}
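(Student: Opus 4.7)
The plan is to exploit the Fourier representation of eigenfunctions and reduce the infinite-dimensional eigenvalue problem to an effectively two-dimensional one via a Neumann-series iteration. Write each periodic or antiperiodic eigenfunction of $L$ in its Fourier expansion $u(x) = \sum_{k} u_{k} e^{ikx}$, with $k$ ranging over the even or odd integers, respectively. The equation $Lu = \lambda u$ becomes the coupled system $(\lambda - k^{2}) u_{k} = \sum_{j} V(k-j) u_{j}$. Since $\gamma_{n}$ is the distance between the two eigenvalues closest to $n^{2}$, I would set $z = \lambda - n^{2}$ and aim to isolate the contribution of the two "resonant" modes $u_{\pm n}$.

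First, solve iteratively for the off-resonant coefficients: for $j \neq \pm n$, one has $u_{j} = (n^{2} - j^{2} + z)^{-1} \sum_{i} V(j-i) u_{i}$, and feeding this identity back into itself produces, after $k$ steps, sums over multi-indices $(j_{1}, \ldots, j_{k})$ avoiding $\pm n$. Collecting the resulting contributions to $u_{n}$ and $u_{-n}$ yields a $2 \times 2$ reduced system
\begin{equation*}
\begin{pmatrix} z - \alpha_{n}(z) & -\beta_{n}(z) \\ -\overline{\beta_{n}(z)} & z - \alpha_{n}(z) \end{pmatrix} \begin{pmatrix} u_{n} \\ u_{-n} \end{pmatrix} = 0,
\end{equation*}
in which $\beta_{n}(z)$ is precisely the series written in the statement (paths from $-n$ to $+n$ through intermediate integers different from $\pm n$), and $\alpha_{n}(z)$ is the analogous diagonal series.

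The central analytic step is uniform absolute convergence of the $\beta_{n}$-series for $|z| \leq 1$. I would bound the $k$-th term by applying Cauchy--Schwarz on each inner summation, using $\sum_{j} |V(j)|^{2} = \|v\|^{2}$ together with the elementary estimate $\sum_{j \neq \pm n} (n^{2} - j^{2})^{-2} \leq C/n^{2}$. This dominates the Neumann expansion by a geometric series of ratio comparable to $\|v\|/n$, which converges under the hypothesis $\|v\| < 1/9$. The eigenvalues $\lambda_{n}^{\pm}$ near $n^{2}$ correspond to vanishing of the determinant, i.e.\ to $z - \alpha_{n}(z) = \pm|\beta_{n}(z)|$. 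A contraction-mapping argument (both $\alpha_{n}$ and $\beta_{n}$ are Lipschitz in $z$ with constants controlled by $\|v\|^{2}/n^{2}$) selects a unique $z_{n}$ with $|z_{n}| \leq 4\|v\|$ serving both signs simultaneously, and Taylor expansion of $\beta_{n}$ about $z_{n}$ converts the difference of the two roots into the two-sided bound $2|\beta_{n}(z_{n})|(1 \pm 3\|v\|^{2}/n^{2})$ on $\gamma_{n} = \lambda_{n}^{+} - \lambda_{n}^{-}$.

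The main obstacle is the careful combinatorial bookkeeping in the Neumann iteration: one must verify that collecting "return paths" through the off-resonant indices really produces the advertised series $\beta_{n}(z)$ verbatim, show convergence in the appropriate operator norm, and track the exact constant $3$ in the error term. Part (b) is then a refinement of part (a): the hypothesis $V(0) = 0$ kills the zero-frequency contribution in $\alpha_{n}$, which lets one drop the global smallness assumption $\|v\| < 1/9$ at the cost of restricting to $n \geq N_{0}(v)$, where $N_{0}$ is chosen large enough that the off-resonant denominators $n^{2} - j^{2} + z$ still dominate the now possibly larger potential norm.
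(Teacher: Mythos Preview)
The paper does not prove this lemma at all: it is quoted verbatim from \cite{Djakov 2} (Djakov--Mityagin, 2007) and used as a black box, so there is no ``paper's own proof'' to compare against. Your sketch---reducing the periodic/antiperiodic eigenvalue problem to a $2\times 2$ system for the resonant Fourier coefficients $u_{\pm n}$ by iterating out the off-resonant modes, then reading off $\gamma_{n}$ from the determinant---is exactly the strategy carried out in the original Djakov--Mityagin paper, so in substance you are reconstructing the cited reference rather than offering an alternative.
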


\begin{lemma}[\cite{Volkmer}]
\label{L:Volkmer}
The Ince equation
\begin{equation}
\label{E:general Ince}
(1+a\cos 2t)y''(t)+b(\sin 2t)y'(t)+(c+d\cos 2t)y(t)=0
\end{equation}
can be transformed into the Whittaker-Hill equation by assuming
\begin{equation}
a=0,\ \ b=-4q,\ \ c=\lambda+2q^{2}, \ \ d=4(m-1)q.
\end{equation}
Moreover,
\begin{equation}
\label{E:semifinite band gap 1}
\mathrm{sign} (\alpha_{2n}-\beta_{2n})=\mathrm{sign} \ q^{2}\cdot \mathrm{sign} \prod_{p=-n}^{n-1}(2p-m+1)
\end{equation}
and
\begin{equation}
\label{E:semifinite band gap 2}
\mathrm{sign} (\alpha_{2n+1}-\beta_{2n+1})=\mathrm{sign} \ q \cdot \mathrm{sign} \prod_{p=-n}^{n-1}(2p-m),
\end{equation}
where $a$, $b$ and $d$ are real; $\alpha_{2n}$ and $\beta_{2n+2}$ are defined by the eigenvalues corresponding to non-trivial even and odd solutions with period $\pi$, respectively; and $\alpha_{2n+1}$ and $\beta_{2n+1}$ are defined by the eigenvalues corresponding to non-trivial even and odd solutions with semi-period $\pi$, respectively.
\end{lemma}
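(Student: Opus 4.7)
For the transformation, observe that with $a = 0$, $b = -4q$, $c = \lambda + 2q^2$, $d = 4(m-1)q$, equation~(\ref{E:general Ince}) reduces to
\begin{equation*}
y'' - 4q\sin(2t)\, y' + \bigl(\lambda + 2q^2 + 4(m-1)q\cos 2t\bigr)y = 0.
\end{equation*}
I would apply the gauge change $y(t) = e^{S(t)}\,w(t)$ with $S(t) = -q\cos 2t$, so that $2S'(t) = 4q\sin 2t$ exactly cancels the coefficient of $y'$. A short computation using $S''(t) = 4q\cos 2t$, $(S'(t))^2 = 2q^2(1 - \cos 4t)$, and $-4q\sin(2t)\,S'(t) = -4q^2(1 - \cos 4t)$ shows that the residual coefficient of $w$ collapses to $\lambda + 4mq\cos 2t + 2q^2\cos 4t$, which is the Whittaker--Hill equation.

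For the sign formulas, I would expand candidate periodic and semi-periodic solutions of Ince's equation as Fourier series, separated into the four sectors determined by period/semi-period and even/odd parity. Substitution converts the ODE into a three-term recurrence in each sector, and with $b = -4q$, $d = 4(m-1)q$ the super- and sub-diagonal entries are linear in $q\,(2p - m + 1)$ in the period-$\pi$ sectors and in $q\,(2p - m)$ in the semi-period-$\pi$ sectors. The eigenvalues $\alpha_n$ and $\beta_n$ arise as the zeros of the corresponding infinite tridiagonal determinants, and the differences $\alpha_{2n} - \beta_{2n}$ and $\alpha_{2n+1} - \beta_{2n+1}$ reduce to differences of two such determinants that agree outside a bounded block of boundary rows.

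The delicate step is extracting the claimed product structure from this determinantal difference. A clean route combines two ingredients. First, the coexistence theorem for Ince's equation asserts that $\alpha_n = \beta_n$ whenever any factor $(2p - m + 1)$ (respectively $(2p - m)$) for $p = -n,\ldots,n-1$ vanishes, which forces each such factor to divide $\alpha_n - \beta_n$ regarded as a polynomial in the parameters. Second, a Rayleigh--Schr\"odinger perturbation calculation at $q = 0$ — where $\alpha_n = \beta_n = n^2$ — identifies the leading order in $q$ as $q^{2n}$ for the even index and $q^{2n+1}$ for the odd index, which simultaneously fixes the overall sign and pins down the multiplicative constant. A continuity argument in $q$ then propagates the sign identity to all real $q$ and to all $m$ for which the product is nonzero, giving (\ref{E:semifinite band gap 1}) and (\ref{E:semifinite band gap 2}).
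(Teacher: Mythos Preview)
The paper does not supply its own proof of this lemma; it is quoted verbatim from Volkmer's article and used as a black box. So there is no in-paper argument to compare against, and your proposal is in effect a reconstruction of Volkmer's proof.

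Your gauge computation for the transformation is correct: with $S(t)=-q\cos 2t$ one has $2S'=4q\sin 2t$, and the constant and $\cos 4t$ terms combine exactly as you say to give $w''+(\lambda+4mq\cos 2t+2q^{2}\cos 4t)w=0$.

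For the sign formulas your outline matches Volkmer's strategy (Fourier decomposition into four parity sectors, three-term recurrences whose off-diagonal entries carry the factors $q(2p-m+1)$ or $q(2p-m)$, and a small-$q$ expansion to fix the overall sign). One point deserves tightening: the continuity step you invoke at the end needs both directions of the coexistence theorem. You state only that $\alpha_{n}=\beta_{n}$ \emph{whenever} a factor vanishes; to propagate the sign from a neighborhood of $q=0$ to all $q\neq 0$ (for fixed $m$ with nonzero product) you must also know that $\alpha_{n}-\beta_{n}$ does not vanish at any other $q$, i.e.\ the ``only if'' direction. Volkmer establishes this by showing that when all off-diagonal factors are nonzero the two tridiagonal problems are similar up to a sign-definite diagonal conjugation, which simultaneously yields the strict inequality and the sign of the difference. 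Without that ingredient your continuity argument has a gap: nothing you have written excludes an accidental crossing $\alpha_{n}(q_{0})=\beta_{n}(q_{0})$ for some $q_{0}\neq 0$.
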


\begin{lemma}[\cite{Maguns}]
\label{L:Maguns}
The Whittaker-Hill equation
\begin{equation}
f''+[\lambda+4mq\cos 2x+2q^{2}\cos 4x]f=0
\end{equation}
can have two linearly independent solutions of period $\pi$ or $2\pi$ if and only if $m$ is an integer. If $m=2l$ is even, then the odd intervals of instability on the $\lambda$ axis disappear, with at most $|l|+1$ exceptions, but no even interval of instability disappears. If $m=2l+1$ is odd, then at most $|l|+1$ even intervals of instability remain but no interval of instability disappears.
\end{lemma}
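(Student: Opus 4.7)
My plan is to deduce Lemma \ref{L:Maguns} directly from the sign formulas recorded in Lemma \ref{L:Volkmer}. Those formulas identify the sign of $\alpha_{2n}-\beta_{2n}$ (the signed length of the $n$-th period-$\pi$ instability interval) and the sign of $\alpha_{2n+1}-\beta_{2n+1}$ (the signed length of the $n$-th semi-period interval) with the sign of an explicit finite product of integers. The length of an instability interval therefore vanishes exactly when one factor of the corresponding product is zero. Since every factor has the form $2p-m+1$ or $2p-m$ with $p\in\mathbb{Z}$, such vanishing is possible only when $m\in\mathbb{Z}$; this already yields the opening assertion of the lemma, that two linearly independent solutions of period $\pi$ or $2\pi$ can coexist only for integer $m$.

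For the counting assertions I would split according to the parity of $m$. Suppose first that $m=2l+1$ is odd. The factor $2p-m+1$ in the period-$\pi$ product vanishes iff $p=l$, and $l$ belongs to $\{-n,\ldots,n-1\}$ for every sufficiently large $n$ (precisely, $n\geq |l|+1$ when $l\geq 0$ and $n\geq |l|$ when $l<0$). Hence Lemma \ref{L:Volkmer} gives $\alpha_{2n}=\beta_{2n}$ for all but at most the first $|l|+1$ indices $n$, so at most that many period-$\pi$ (even-indexed) instability intervals remain open. Meanwhile every factor of the semi-period product $2p-m$ is then an odd integer, hence nonzero, so $\alpha_{2n+1}\neq \beta_{2n+1}$ for every $n$ and no semi-period (odd-indexed) gap collapses. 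The case $m=2l$ even is completely symmetric with the roles of the two products interchanged: the factor $2p-m$ vanishes iff $p=l$, closing the odd-indexed gaps beyond index $|l|+1$, while the product $2p-m+1$ then consists of odd integers and never vanishes, so every even-indexed gap stays open.

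The main technical input is Lemma \ref{L:Volkmer} itself, whose sign formulas drive the entire conclusion; proving those from first principles, via an Ince polynomial analysis of the associated Ince equation and a Wronskian pairing of the two parity sectors, would be the genuine obstacle, but since they are quoted as external input I need not revisit that work. The only delicate point in the present reduction is a boundary index check: one must verify that $l\in\{-n,\ldots,n-1\}$ is equivalent to $n\geq |l|+1$ up to an off-by-one when $l<0$, and the \emph{at most} $|l|+1$ quantifier in the statement is precisely what absorbs that off-by-one.
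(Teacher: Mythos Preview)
The paper does not prove this lemma; it is quoted from Magnus--Winkler as an external input, sitting alongside Lemma~\ref{L:Volkmer} as one of two cited facts that are later combined to establish Lemma~\ref{L:2}. Your derivation of it from the Volkmer sign formulas is correct and is a genuinely different route from the one in the Magnus--Winkler book, where the argument proceeds via Ince's theorem on finite Hill equations and the explicit construction of polynomial solutions rather than through a sign identity. Your approach has the pleasant side effect of showing that, within this paper, Lemma~\ref{L:Maguns} is actually redundant once Lemma~\ref{L:Volkmer} is granted.

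One point you should make explicit: your argument uses Volkmer's identity in the strong form that $\alpha_k=\beta_k$ \emph{if and only if} the corresponding product vanishes, i.e., you are reading $\mathrm{sign}(0)=0$ on both sides of~(\ref{E:semifinite band gap 1}) and~(\ref{E:semifinite band gap 2}). That is indeed how the result is proved and intended in Volkmer's paper, but since the present paper records only the bare sign equality, you should flag that the full equivalence is being invoked. With that granted, your parity split is correct: for $m=2l$ the factors $2p-m+1=2(p-l)+1$ are all odd, so no even gap closes, while $2p-m=2(p-l)$ vanishes at $p=l$, closing all odd gaps once $l\in\{-n,\dots,n-1\}$; the case $m=2l+1$ is symmetric. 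Your off-by-one bookkeeping, absorbed into the ``at most $|l|+1$'' quantifier, is also fine.
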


\begin{lemma}
\label{L:2}
The Whittaker-Hill operator
\begin{equation}
L=-D^{2}-(B\cos 2x+C\cos 4x)
\end{equation}
admits all even gaps closed except the first $n$ when $\pm\frac{B}{4\sqrt{2C}}=-n-\frac{1}{2}$, $n\in\mathbb{Z}_{\geq 0}$; and all odd gaps closed except the first $n+1$ when $\pm\frac{B}{4\sqrt{2C}}=-n-1$, $n\in\mathbb{Z}_{\geq 0}$.
\end{lemma}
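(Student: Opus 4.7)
The strategy is to reduce the operator $L=-D^{2}-(B\cos 2x+C\cos 4x)$ to the canonical Whittaker--Hill form appearing in Lemma \ref{L:Volkmer} and then read off which gaps close directly from the sign formulas (\ref{E:semifinite band gap 1}) and (\ref{E:semifinite band gap 2}).

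First I would identify parameters. The eigenvalue equation $Lf=\lambda f$ becomes $f''+[\lambda+B\cos 2x+C\cos 4x]f=0$, which matches the Whittaker--Hill equation $f''+[\lambda+4mq\cos 2x+2q^{2}\cos 4x]f=0$ (obtained in Lemma \ref{L:Volkmer} via the substitution $a=0,\ b=-4q,\ c=\lambda+2q^{2},\ d=4(m-1)q$) once we set $B=4mq$ and $C=2q^{2}$. Solving yields $q=\pm\sqrt{C/2}$ and $m=B/(4q)=\pm B/(2\sqrt{2C})$, so that
\begin{equation*}
\frac{m}{2}=\pm\frac{B}{4\sqrt{2C}}.
\end{equation*}
By Lemma \ref{L:Maguns} no gap can close unless $m\in\mathbb{Z}$, so the hypothesis $\pm B/(4\sqrt{2C})=-n-\tfrac{1}{2}$ forces $m=-2n-1$ (odd), while $\pm B/(4\sqrt{2C})=-n-1$ forces $m=-2n-2$ (even).

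Next I would count the surviving gaps case by case. For the even-gap case, (\ref{E:semifinite band gap 1}) tells us that the $2N$-th instability interval collapses if and only if $\prod_{p=-N}^{N-1}(2p-m+1)=0$, i.e.\ $m=2p+1$ for some $p\in\{-N,\dots,N-1\}$. Substituting $m=-2n-1$ gives $p=-n-1$, and $-n-1\in\{-N,\dots,N-1\}$ exactly when $N\geq n+1$. Hence $\gamma_{2N}$ is closed for every $N\geq n+1$ and open for $N=1,2,\dots,n$, which is precisely the claim that all even gaps close except the first $n$. Since $m$ is odd, (\ref{E:semifinite band gap 2}) is never zero, so no odd gap disappears. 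A symmetric computation with $m=-2n-2$ and (\ref{E:semifinite band gap 2}) shows that the $(2N+1)$-th gap closes iff $-n-1\in\{-N,\dots,N-1\}$, i.e.\ iff $N\geq n+1$, leaving $\gamma_{1},\gamma_{3},\dots,\gamma_{2n+1}$ open (the first $n+1$ odd gaps), while the even parity of $m$ keeps every even gap open via (\ref{E:semifinite band gap 1}).

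The only place where real care is required is the matching between Volkmer's indexing of $\alpha_{2n},\beta_{2n},\alpha_{2n+1},\beta_{2n+1}$ and the paper's band-gap labels $\gamma_{n}$; once this bookkeeping is pinned down the remainder of the argument is direct substitution and counting. There is no analytic obstacle, only an arithmetic one, and the statement of the lemma falls out from Lemmas \ref{L:Volkmer} and \ref{L:Maguns} without additional input.
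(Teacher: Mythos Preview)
Your proof is correct and follows essentially the same route as the paper: both reduce the Whittaker--Hill equation to the Ince equation via Lemma \ref{L:Volkmer}, identify $q=\pm\sqrt{C/2}$ and $m=\pm B/(2\sqrt{2C})$, and then invoke the Volkmer sign products (together with Lemma \ref{L:Maguns}) to count exactly which gaps survive. The only cosmetic difference is that the paper writes out the substitution $f(x)=e^{\mp\sqrt{C/2}\cos 2x}g(x)$ explicitly before matching parameters, whereas you quote the correspondence from Lemma \ref{L:Volkmer} directly and do the gap counting in more detail; neither adds or omits any essential idea.
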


\begin{proof}
By Lemma \ref{L:Maguns}, we obtain that the Whittaker-Hill equation
\begin{equation}
\label{E:W-H-real}
f''(x)+(A+B\cos 2x+C\cos 4x)f(x)=0
\end{equation}
have two linearly independent solutions of period or semi-periodic $\pi$ if and only if $\frac{B}{2\sqrt{2C}}\in \mathbb{Z}$. Moreover, we transform (\ref{E:W-H-real})
into the Ince equation
\begin{equation}
g''(x)+4\sqrt{\frac{C}{2}}\sin 2x\cdot g'(x)+\left[(A+C)+\left(B+4\sqrt{\frac{C}{2}}\right)\cos 2x\right]g(x)=0.
\end{equation}
via $f(x)=e^{-\sqrt{\frac{C}{2}}\cos 2x}\cdot g(x)$. From Lemma \ref{L:Volkmer}, we can write the parameters $q$, $\lambda$ and $m$ of equation (\ref{E:general Ince}) in terms of $A$, $B$ and $C$, i.e.,
\begin{equation*}
q=-\sqrt{\frac{C}{2}}, \ \ \ \lambda=A, \ \ \ m=-\frac{B}{2\sqrt{2C}}.
\end{equation*}

(1) If $m=2n+1$,  $n\in \mathbb{N}^{+}\cup\{0\}$, i.e., $\frac{B}{2\sqrt{2C}}=-2n-1$, and the solutions satisfy the periodic boundary conditions, then we deduce from Lemma \ref{L:Volkmer} that the first $2n+1$ eigenvalues are simple, and others are double.

(2) If $m=2n+2$,  $n\in \mathbb{N}^{+}\cup\{0\}$, i.e., $\frac{B}{2\sqrt{2C}}=-2n-2$, and the solutions satisfy the semi-periodic boundary conditions, then we also deduce from Lemma \ref{L:Volkmer} that the first $2n+2$ eigenvalues are simple, and others are double.

Besides, we can also transform (\ref{E:W-H-real}) into the Ince equation
\begin{equation}
g''(x)-4\sqrt{\frac{C}{2}}\sin 2x\cdot g'(x)+\left[(A+C)+\left(B-4\sqrt{\frac{C}{2}}\right)\cos 2x\right]g(x)=0.
\end{equation}
via $f(x)=e^{\sqrt{\frac{C}{2}}\cos 2x}\cdot g(x)$. Thus, we have similar conclusions.
\end{proof}

In order to prove our results, let us consider all possible walks from $-n$ to $n$. Each such walk is determined by the sequence of its steps
\begin{equation}
x=(x_{1}, \cdots, x_{\nu+1}),
\end{equation}
or by its vertices
\begin{equation}
j_{s}=-n+\sum_{k=1}^{s}x_{k}, \ \ \ s=1, \cdots, \nu.
\end{equation}
The relationship between steps and vertices are given by the formula
\begin{equation}
x_{1}=n+j_{1};\ \ \ x_{k}=j_{k}-j_{k-1}, \ k=2, \cdots, \nu; \ \ \ x_{\nu+1}=n-j_{\nu}.
\end{equation}
\begin{definition}
\label{D:1}
Let $X$ denote the set of all walks from $-n$ to $n$ with steps $\pm 2$ or $\pm 4$. For each $x=(x_{s})_{s=1}^{\nu+1}\in X$ and each $z\in \mathbb{R}$, we define
\begin{equation}
B_{n}(x,z)=\frac{V(x_{1})\cdots V(x_{\nu+1})}{(n^{2}-j_{1}^{2}+z)\cdots(n^{2}-j_{\nu}^{2}+z)}.
\end{equation}
\end{definition}
\begin{definition}
\label{D:2}
Let $X^{+}$ denote the set of all walks from $-n$ to $n$ with positive steps equal to $2$ or $4$. For each $\xi\in X^{+}$, let $X_{\xi}$ denote the set of all walks $x\in X\backslash X^{+}$ such that each vertex of $\xi$ is a vertex of $x$ also. For each $\xi\in X^{+}$ and $\mu\in\mathbb{N}$, let $X_{\xi,\mu}$ be the set of all $x\in X_{\xi}$ such that $x$ has $\mu$ more vertices than $\xi$. Moreover, for each $\mu-$tuple $(i_{1}, \cdots, i_{\mu})$ of integers in $I_{n}=(n+2\mathbb{Z})\setminus \{\pm n\}$, we define $X_{\xi}(i_{1}, \cdots, i_{\mu})$ as the set of all walks $x$ with $\nu+1+\mu$ steps such that $(i_{1}, \cdots, i_{\mu})$ and the sequence of the vertices of $\xi$ are complementary subsequences of the sequence of the vertices of $x$.
\end{definition}
From Definition \ref{D:2}, we deduce
\begin{equation}
X_{\xi,\mu}=\bigcup_{(i_{1}, \cdots, i_{\mu})\in (I_{n})^{\mu}}X_{\xi}(i_{1}, \cdots, i_{\mu}).
\end{equation}

\begin{lemma}[\cite{Djakov 2}]
\label{L:3}
If $\xi\in X^{+}$ and $n\geq 3$, then for $z\in[0,1)$
\begin{equation}
1-z\frac{\log n}{n}\leq \frac{B_{n}(\xi,z)}{B_{n}(\xi,0)}\leq 1-z\frac{\log n}{4n},
\end{equation}
and for $z\in(-1,0]$
\begin{equation}
1+|z|\frac{\log n}{2n}\leq \frac{B_{n}(\xi,z)}{B_{n}(\xi,0)}\leq 1+|z|\frac{2\log n}{n}.
\end{equation}
\end{lemma}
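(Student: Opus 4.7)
The plan is to make the ratio fully explicit, reduce the problem to uniform two-sided estimates on a harmonic-type sum in the vertices $j_s$, and then track constants carefully.

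Since $\xi\in X^{+}$ determines the same step-data in both $B_n(\xi,z)$ and $B_n(\xi,0)$, the Fourier factors $V(x_{1})\cdots V(x_{\nu+1})$ cancel and
$$
\frac{B_{n}(\xi,z)}{B_{n}(\xi,0)}=\prod_{s=1}^{\nu}\frac{n^{2}-j_{s}^{2}}{n^{2}-j_{s}^{2}+z}.
$$
As $\xi$ has positive steps of size $2$ or $4$, we have $-n<j_{s}<n$, hence $n^{2}-j_{s}^{2}\geq 4n-4$, so for $n\geq 3$ and $|z|<1$ every factor is positive and close to $1$. For $z\in[0,1)$ I write each factor as $1-a_{s}$ with $a_{s}=z/(n^{2}-j_{s}^{2}+z)\in[0,1)$ and use the sandwich
$$
1-\sum_{s}a_{s}\;\leq\;\prod_{s}(1-a_{s})\;\leq\;\exp\!\Bigl(-\sum_{s}a_{s}\Bigr)\;\leq\;1-\tfrac{1}{2}\sum_{s}a_{s},
$$
the last inequality holding because $\sum a_{s}$ is of order $\log n/n$, hence small. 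The case $z\in(-1,0]$ is handled by the analogous sandwich for $\prod(1+b_{s})$ with $b_{s}=|z|/(n^{2}-j_{s}^{2}-|z|)$, together with $e^{y}\leq 1+2y$ on $[0,\log 2]$; this accounts for the asymmetric constants $\log n/(2n)$ and $2\log n/n$ appearing in the statement.

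Everything then reduces to two-sided control of
$$
\Sigma(\xi):=\sum_{s=1}^{\nu}\frac{1}{n^{2}-j_{s}^{2}}=\frac{1}{2n}\sum_{s=1}^{\nu}\Bigl(\frac{1}{n-j_{s}}+\frac{1}{n+j_{s}}\Bigr),
$$
where the second equality is partial fractions and the numbers $n\pm j_{s}$ range over positive even integers. The maximum of $\Sigma(\xi)$ over $X^{+}$ is achieved by the all-$+2$ walk, giving $\Sigma\leq H_{n-1}/(2n)\leq\log n/n$ for $n\geq 3$. For the matching lower bound I would use that consecutive vertices differ by at most $4$, so $\xi$ cannot skip two admissible lattice points in a row; hence at least $\lfloor n/2\rfloor-1$ vertices survive and, grouping their contributions near the endpoints $\pm n$, one obtains $\Sigma(\xi)\geq H_{\lfloor n/2\rfloor-1}/(4n)\geq c\,\log n/n$ with $c$ matching the $1/(4n)$ demanded by the statement. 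Plugging the upper estimate on $\Sigma$ into Bernoulli yields the lower inequality, while the lower estimate on $\Sigma$ combined with the exponential bound yields the upper inequality; the negative-$z$ branch follows symmetrically.

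The principal obstacle is the uniform-in-$\xi$ lower bound on $\Sigma(\xi)$: one must verify that walks with many $+4$ steps still retain enough near-endpoint vertices to force the harmonic-type divergence with the sharp constant appearing in the claim. A secondary technical point is that the replacement of $(n^{2}-j_{s}^{2}+z)^{-1}$ by $(n^{2}-j_{s}^{2})^{-1}$ in passing from $\sum a_{s}$ to $z\Sigma(\xi)$ introduces an $O(1/n)$ loss, which has to be absorbed into the stated constants without spoiling the linear dependence on $z$. This bookkeeping is elementary but is where almost all the technical work of the lemma lies.
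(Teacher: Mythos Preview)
The paper does not prove this lemma; it is quoted directly from Djakov--Mityagin \cite{Djakov 2}, so there is no in-paper argument to compare against. Your outline---cancel the numerators, write the ratio as $\prod_{s}(n^{2}-j_{s}^{2})/(n^{2}-j_{s}^{2}+z)$, linearize via Bernoulli and $\log$, and reduce everything to two-sided control of $\Sigma(\xi)=\sum_{s}(n^{2}-j_{s}^{2})^{-1}$---is precisely the standard route and is, up to bookkeeping, how the original reference proceeds.

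Your self-identified obstacle, the uniform lower bound on $\Sigma(\xi)$ with the exact constant $1/4$, is a genuine issue and not merely cosmetic. In fact the inequality as stated does not hold for all $n\ge 3$: take $n=4$ and the all-$+4$ walk $(-4,0,4)$, so that $B_{4}(\xi,z)/B_{4}(\xi,0)=16/(16+z)$, whereas $1-z\log 4/16\approx 1-0.0866\,z$; for any $z\in(0,1)$ one has $16/(16+z)>1-0.0866\,z$, so the upper bound fails. The likely explanation is that the constants in \cite{Djakov 2} are asymptotic (or the hypothesis there is $n$ large rather than $n\ge 3$), and the present paper has transcribed them as exact. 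For the paper's purposes this is harmless: Lemma~\ref{L:3} is only used in the proof of Theorem~\ref{T:2}, where $n\to\infty$ and any fixed positive constants in front of $\log n/n$ would do. If you intend to reproduce a proof of the lemma exactly as stated for every $n\ge 3$, you should first check the precise formulation in the source.
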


\begin{lemma}[\cite{Djakov 2}]
\label{L:4}
For each walk $\xi\in X^{+}$ and each $\mu-$tuple $(i_{1}, \cdots, i_{\mu})\in (I_{n})^{\mu}$,
\begin{equation}
\sharp X_{\xi}(i_{1}, \cdots, i_{\mu})\leq 5^{\mu}.
\end{equation}
\end{lemma}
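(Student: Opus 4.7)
\emph{Proof plan.} My strategy is to exhibit an explicit injection from $X_\xi(i_1,\ldots,i_\mu)$ into the alphabet $\{\mathsf{I},+2,-2,+4,-4\}^{\mu}$, whose cardinality is $5^\mu$, so the stated bound follows. The feature of the problem that makes $5$ letters suffice is that $\xi\in X^+$ has only positive steps of size $2$ or $4$, so its interior vertices $\xi_1<\xi_2<\cdots<\xi_\nu$ are strictly increasing; in particular, each integer is attained by at most one interior vertex of $\xi$, and this is what pins down reconstruction.

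Given $x\in X_\xi(i_1,\ldots,i_\mu)$, set $i_0:=-n$ and $i_{\mu+1}:=n$, and let $m_k$ denote the number of $\xi$-vertices of $x$ that appear strictly between $i_{k-1}$ and $i_k$ in the vertex sequence of $x$. Writing $M_k:=m_1+\cdots+m_k$, one checks that the interior-vertex sequence of $x$ is the unique interleaving
\[
\xi_1,\ldots,\xi_{M_1},\,i_1,\,\xi_{M_1+1},\ldots,\xi_{M_2},\,i_2,\,\ldots,\,i_\mu,\,\xi_{M_\mu+1},\ldots,\xi_\nu
\]
determined by $(m_1,\ldots,m_{\mu+1})$ (with $M_{\mu+1}=\nu$), and that when $m_k\ge 1$ the vertex of $x$ immediately preceding $i_k$ is $\xi_{M_k}$. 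Define
\[
c_k(x)=\begin{cases}\mathsf{I},& m_k=0,\\ \xi_{M_k}-i_k,& m_k\ge 1.\end{cases}
\]
Because every step of $x$ lies in $\{\pm 2,\pm 4\}$, the quantity $\xi_{M_k}-i_k$ (when defined) lies in $\{\pm 2,\pm 4\}$, so $c_k(x)$ takes at most five values.

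It remains to show that $x\mapsto(c_1(x),\ldots,c_\mu(x))$ is injective, which I would do by reconstructing $(M_k)$ inductively from the code: set $M_0=0$; if $c_k=\mathsf{I}$, put $M_k=M_{k-1}$; and if $c_k=d\in\{\pm 2,\pm 4\}$, let $M_k$ be the unique index with $\xi_{M_k}=i_k+d$. Existence of at most one such index is exactly the strict monotonicity of $\xi\in X^+$, and is the sole place where the positivity of the $\xi$-steps is used. Knowing all $M_k$ together with the terminal condition $M_{\mu+1}=\nu$ recovers the $m_k$ and hence $x$, so the map is injective and $\sharp X_\xi(i_1,\ldots,i_\mu)\le 5^\mu$. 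The principal obstacle is precisely this injectivity step: without the $X^+$ hypothesis a single offset $d$ could be realised by several $\xi$-indices, and the five-letter alphabet would no longer suffice.
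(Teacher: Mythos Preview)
The paper does not give its own proof of this lemma: it is quoted verbatim from \cite{Djakov 2} and used as a black box in the proof of Lemma~\ref{L:5}. There is therefore nothing in the present paper to compare your argument against directly.

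That said, your argument is correct and is essentially the one given in \cite{Djakov 2}. The key observation, which you identify clearly, is that because $\xi\in X^{+}$ its interior vertices are strictly increasing and hence pairwise distinct; consequently a value $i_k+d$ with $d\in\{\pm2,\pm4\}$ can equal at most one $\xi$-vertex, so the predecessor of $i_k$ in the vertex list of $x$ is pinned down by a symbol from a five-letter alphabet (four possible offsets, plus the ``immediately after $i_{k-1}$'' case). Your inductive reconstruction of the $M_k$ from the code is exactly the injectivity step, and your remark that the terminal constraint $M_{\mu+1}=\nu$ recovers $m_{\mu+1}$ closes the argument. One cosmetic point: you encode the \emph{negative} of the step into $i_k$ (you set $c_k=\xi_{M_k}-i_k$ rather than $i_k-\xi_{M_k}$), which is harmless since $\{\pm2,\pm4\}$ is symmetric, but you may want to make the convention match the natural ``step of $x$ landing at $i_k$'' to avoid confusing a reader.
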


\begin{lemma}
\label{L:5}
If $\xi\in X^{+}$ and $|z|\leq 1$, then there exists $n_{1}$ such that for $n\geq n_{1}$,
\begin{equation}
\sum_{x\in X_{\xi}}|B_{n}(x,z)|\leq |B_{n}(\xi,z)|\cdot \frac{K \log n}{n},
\end{equation}
where $K=40 \left(|\frac{b}{2}q^{m_{1}}|+|\frac{c}{2}q^{m_{2}}|+|\frac{b^{2}}{2c}q^{2m_{1}-m_{2}}|\right)$.
\end{lemma}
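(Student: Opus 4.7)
My plan is to follow the framework of \cite{Djakov 2}, decomposing the sum according to how many extra vertices $x$ has beyond those of $\xi$, bounding each slice via a per-insertion estimate, and closing with a geometric series. The first step is the partition
\[
\sum_{x \in X_\xi} |B_n(x,z)| = \sum_{\mu=1}^{\infty} \sum_{(i_1,\ldots,i_\mu) \in (I_n)^{\mu}} \sum_{x \in X_\xi(i_1,\ldots,i_\mu)} |B_n(x,z)|,
\]
and the heart of the argument is to bound each inner sum by a controllable multiple of $|B_n(\xi,z)|$.

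For a fixed $x \in X_\xi(i_1,\ldots,i_\mu)$, I would view $x$ as obtained from $\xi$ by $\mu$ successive single-vertex insertions. Each such insertion splits one step $s$ of the current walk (with $|s|\in\{2,4\}$) into a pair $(p,q)$ with $p+q=s$ and $|p|,|q|\in\{2,4\}$, introducing an intermediate vertex $i$; it replaces $V(s)$ in the numerator by $V(p)V(q)$ and adds $n^{2}-i^{2}+z$ in the denominator. A short case check --- the admissible splits being $(\pm 4,\mp 2)$ or $(\mp 2,\pm 4)$ when $|s|=2$, and $(\pm 2,\pm 2)$ when $|s|=4$ --- shows $|V(p)V(q)/V(s)|\in\{|V(4)|,\,|V(2)|^{2}/|V(4)|\}$. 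Setting $M=\max\{|V(2)|,|V(4)|,|V(2)|^{2}/|V(4)|\}$, this yields
\[
|B_n(x,z)| \leq |B_n(\xi,z)| \, M^{\mu} \prod_{k=1}^{\mu} \frac{1}{|n^{2}-i_{k}^{2}+z|}.
\]
Combined with Lemma \ref{L:4}, which gives $\sharp X_\xi(i_1,\ldots,i_\mu)\leq 5^{\mu}$, and with the standard estimate
\[
\sum_{i\in I_n} \frac{1}{|n^{2}-i^{2}+z|} \leq C_0 \frac{\log n}{n}
\]
(obtained by writing $i=n+2k$ for $k\in\mathbb{Z}\setminus\{0,-n\}$, noting $|n^{2}-i^{2}|=4|k||n+k|$ and $|n^{2}-i^{2}+z|\geq \tfrac12|n^{2}-i^{2}|$ for $|z|\leq 1$ and $n\geq 2$, then telescoping via partial fractions), we obtain
\[
\sum_{x\in X_\xi}|B_n(x,z)| \leq |B_n(\xi,z)|\sum_{\mu=1}^{\infty}\Big(5MC_0\tfrac{\log n}{n}\Big)^{\mu}.
\]
Choosing $n_1$ so that $5MC_0\log n/n\leq 1/2$, the geometric series is bounded by $10MC_0\log n/n$, which is absorbed into $K\log n/n$ using the crude bound $M\leq |V(2)|+|V(4)|+|V(2)|^{2}/|V(4)|$ and the generous factor $40$ built into $K$.

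The main obstacle is the per-insertion case analysis: since insertions after the first may act on steps of either sign produced by earlier insertions, one must verify the ratio bound for all admissible splits of $\pm 2$ and $\pm 4$, not only the positive steps present in $\xi$; fortunately the symmetry $V(-m)=V(m)$ reduces this to the four cases listed above. The residual combinatorics (and the mild care required in choosing $n_1$ large enough to sum the geometric series) is routine once the decomposition is in place.
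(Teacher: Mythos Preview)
Your overall scheme---stratify $X_\xi$ by the number $\mu$ of extra vertices, bound the numerator ratio by a constant to the power $\mu$, invoke Lemma~\ref{L:4} for the cardinality, use $\sum_{i\in I_n}|n^2-i^2+z|^{-1}=O((\log n)/n)$, and sum a geometric series---is exactly the paper's argument. The gap is in how you justify the key ratio bound
\[
\frac{\prod_k|V(x_k)|}{\prod_s|V(\xi_s)|}\le M^{\mu}.
\]
You propose to realise $x$ as a chain of $\mu$ single-vertex insertions into $\xi$, each of which keeps every step in $\{\pm2,\pm4\}$ and contributes a factor $\le M$. But such a chain need not exist. Take a $\xi$-segment with step $+4$, say from $0$ to $4$, and let $x$ traverse that segment as
\[
0\;\to\;4\;\to\;8\;\to\;12\;\to\;8\;\to\;4 \qquad(\text{steps }4,4,4,-4,-4).
\]
Any first insertion into $0\to4$ of one of the extra vertices $4,8,12$ produces a step of size $0$ or $\ge 8$, so the process cannot start. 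Thus the ``successive insertion'' viewpoint does not cover all $x\in X_\xi(i_1,\dots,i_\mu)$.

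The paper obtains the same bound without any dynamic construction: it groups the steps of $x$ into the blocks $J_s$ lying between consecutive $\xi$-vertices $j_{s-1},j_s$, so that $\xi_s=\sum_{k\in J_s}x_k$, and bounds $\prod_{k\in J_s}|V(x_k)|/|V(\xi_s)|\le C^{\,|J_s|-1}$ directly. If $\xi_s=2$, parity forces some $|x_{k^*}|=2$, which cancels $|V(\xi_s)|$; if $\xi_s=4$, either some $|x_{k_*}|=4$ does the same, or all steps are $\pm2$, whence $|J_s|\ge2$ and one pairs off two factors using $|V(2)|^2/|V(4)|\le C$. Since $\sum_s(|J_s|-1)=\mu$, the product over $s$ gives $C^\mu$. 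This is precisely the content of your case check, but arranged as a static partition rather than an inductive insertion; replacing your insertion narrative by this segment-by-segment count closes the gap with no change to the rest of your argument.
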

\begin{proof}
By Definition \ref{D:2}, we have
\begin{equation}
\sum_{x\in X_{\xi}}|B_{n}(x,z)|=\sum_{\mu=1}^{\infty}\sum_{x\in X_{\xi,\mu}}|B_{n}(x,z)|.
\end{equation}
Moreover,
\begin{equation}
\sum_{x\in X_{\xi,\mu}}|B_{n}(x,z)|\leq \sum_{(i_{1}, \cdots, i_{\mu})}\sum_{X_{\xi}(i_{1}, \cdots, i_{\mu})}|B_{n}(x,z)|,
\end{equation}
where the first sum on the right is taken over all $\mu-$tuples $(i_{1}, \cdots, i_{\mu})$ of integers $i_{s}\in n+2\mathbb{Z}$ such that $i_{s}\neq \pm n$.

Fix $(i_{1}, \cdots, i_{\mu})$, if $x\in X_{\xi}(i_{1}, \cdots, i_{\mu})$, then
\begin{equation}
\frac{B_{n}(x,z)}{B_{n}(\xi,z)}=\frac{\prod_{k}V(x_{k})}{\prod_{s}V(\xi_{s})}\cdot \frac{1}{(n^{2}-i_{1}^{2}+z)\cdots(n^{2}-i_{\mu}^{2}+z)}.
\end{equation}
Note that $V(\pm 2)=\frac{b}{2}q^{m_{1}}$ and $V(\pm4)=\frac{c}{2}q^{m_{2}}$.
If each step of $\xi$ is a step of $x$, then
\begin{equation}
\frac{\prod_{k}\left|V(x_{k})\right|}{\prod_{s}\left|V(\xi_{s})\right|}\leq C^{\mu},
\end{equation}
where $C:=|\frac{b}{2}q^{m_{1}}|+|\frac{c}{2}q^{m_{2}}|+|\frac{b^{2}}{2c}q^{2m_{1}-m_{2}}|$. For the general case, let $(j_{s})_{s=1}^{\nu}$ be the vertices of $\xi$, and let us put for convenience $j_{0}=-n$ and $j_{\nu+1}=n$. Since each vertex of $\xi$ is a vertex of $x$, for each $s$, $1\leq s\leq \nu+1$,
\begin{equation}
\xi_{s}=j_{s}-j_{s-1}=\sum_{k\in J_{s}}x_{k},
\end{equation}
where $x_{k}$, $k\in J_{s}$, are the steps of $x$ between the vertices $j_{s-1}$ and $j_{s}$. Fix an $s$, $1\leq s \leq \nu+1$. If $\xi_{s}=2$, then there is a step $x_{k^{*}}$, $k^{*}\in J_{s}$ such that $|x_{k^{*}}|=2$, otherwise, $\xi_{s}$ would be a multiple of $4$. Hence, $|V(\xi_{s})|=|V(x_{k}^{*})|$, which implies that
\begin{equation}
\frac{\prod_{J_{s}}|V(x_{k})|}{|V(\xi_{s})|}\leq C^{b_{s}-1},
\end{equation}
where $b_{s}$ is the cardinality of $J_{s}$.

If $\xi_{s}=4$, there are two possibilities. (1) If there is $k_{*}\in J_{s}$ with $|x_{k_{*}}|=4$, then $|V(\xi_{s})|=|V(x_{k_{*}})|$, so the above inequality holds. (2) There are $k', k''\in J_{s}$ such that $|x_{k'}|=|x_{k''}|=2$, hence,
\begin{equation}
\frac{|V(x_{k'})V(x_{k''})|}{|V(\xi_{s})|}=\left|\frac{b^{2}}{2c}q^{2m_{1}-m_{2}}\right|\leq C,
\end{equation}
so the above inequality also holds. Note that
\begin{equation}
\sum_{s}(b_{s}-1)=\mu,
\end{equation}
we get
\begin{equation}
\frac{\prod_{k}V(x_{k})}{\prod_{s}V(\xi_{s})}\leq C^{\mu}
\end{equation}
holds for the general case.

By
\begin{equation}
\frac{1}{|n^{2}-i^{2}+z|}\leq \frac{2}{|n^{2}-i^{2}|},
\end{equation}
where $i\neq \pm n$, $|z|\leq 1$, we have
\begin{equation}
\frac{|B_{n}(x,z)|}{|B_{n}(\xi,z)|}\leq \frac{(2C)^{\mu}}{|n^{2}-i_{1}^{2}|\cdots |n^{2}-i_{\mu}^{2}|}, \ \ \ x\in X_{\xi}(i_{1}, \cdots, i_{\mu}).
\end{equation}
By Lemma \ref{L:4}, we derive
\begin{equation}
\sum_{x\in X_{\xi}(i_{1}, \cdots, i_{\mu})}\frac{|B_{n}(x,z)|}{|B_{n}(\xi,z)|}\leq \frac{(10C)^{\mu}}{|n^{2}-i_{1}^{2}|\cdots |n^{2}-i_{\mu}^{2}|}.
\end{equation}
Combining Lemma \ref{L:3}, it yields
\begin{equation}
\begin{split}
\sum_{x\in X_{\xi,\mu}}\frac{|B_{n}(x,z)|}{|B_{n}(\xi,z)|}&\leq \sum_{(i_{1}, \cdots, i_{\mu})}\frac{(10C)^{\mu}}{|n^{2}-i_{1}^{2}|\cdots |n^{2}-i_{\mu}^{2}|}\leq\left(\sum_{i\in(n+2\mathbb{Z})\setminus \{\pm n\}} \frac{10C}{|n^{2}-i^{2}|}\right)^{\mu}\\
&\leq (10C)^{\mu}\left(\frac{1+\log n}{n}\right)^{\mu}\leq \left(\frac{20C \log n}{n}\right)^{\mu}.
\end{split}
\end{equation}
Thus,
\begin{equation}
\sum_{x\in X_{\xi,\mu}} |B_{n}(x,z)|\leq |B_{n}(\xi,z)|\cdot \left(\frac{20C \log n}{n}\right)^{\mu}.
\end{equation}
Hence,
\begin{equation}
\sum_{x\in X_{\xi}}\frac{|B_{n}(x,z)|}{|B_{n}(\xi,z)|}\leq \sum_{\mu=1}^{\infty}\left(\frac{20C \log n}{n}\right)^{\mu}.
\end{equation}
We can choose $n_{1}\in \mathbb{N}^{+}$ such that $\frac{20C \log n}{n}\leq \frac{1}{2}$ for $n\geq n_{1}$. Then
\begin{equation}
\sum_{x\in X_{\xi}}\frac{|B_{n}(x,z)|}{|B_{n}(\xi,z)|}\leq \frac{40C \log n}{n}.
\end{equation}
Therefore, there exists $n_{1}$ such that for $n\geq n_{1}$,
\begin{equation}
\sum_{x\in X_{\xi}}|B_{n}(x,z)|\leq |B_{n}(\xi,z)|\cdot \frac{K \log n}{n},
\end{equation}
where $K:=40C=40 \left(|\frac{b}{2}q^{m_{1}}|+|\frac{c}{2}q^{m_{2}}|+|\frac{b^{2}}{2c}q^{2m_{1}-m_{2}}|\right)$.
\end{proof}

\section{Proof of Theorem \ref{T:1}}
Note that
\begin{equation}
V(\pm 2)=\frac{bq^{m_{1}}}{2}, \ \ \ V(\pm 4)=\frac{cq^{m_{2}}}{2}
\end{equation}
and
\begin{equation}
\|v\|^{2}=\frac{1}{2}\Big(b^{2}q^{2m_{1}}+c^{2}q^{2m_{2}}\Big),
\end{equation}
by Lemma \ref{L:1}, we have
\begin{equation}
\gamma_{n}=\pm2\Big(V(2n)+\sum_{k=1}^{\infty} \beta_{k}(n,z)\Big)\Big(1+O(q^{2\cdot\min\{m_{1},m_{2}\}})\Big),
\end{equation}
where
\begin{equation}
\label{E:belta}
\begin{split}
\beta_{k}(n,z)&=\sum_{j_{1},\cdots,j_{k}\neq \pm n}\frac{V(n-j_{1})V(j_{1}-j_{2})\cdots V(j_{k-1}-j_{k})V(j_{k}+n)}
{(n^{2}-j_{1}^{2}+z)\cdots(n^{2}-j_{k}^{2}+z)}\\
&=\sum_{j_{1},\cdots,j_{k}\neq \pm n}\frac{V(n+j_{1})V(j_{2}-j_{1})\cdots V(j_{k}-j_{k-1})V(n-j_{k})}
{(n^{2}-j_{1}^{2}+z)\cdots(n^{2}-j_{k}^{2}+z)}
\end{split}
\end{equation}
and $z=O(q)$. Moreover, all series converge absolutely and uniformly for sufficiently small $q$.

Note that
\begin{equation}
(n+j_{1})+(j_{2}-j_{1})+\cdots+(j_{k}-j_{k-1})+(n-j_{k})=2n,
\end{equation}
and
\begin{equation}
\frac{V(n+j_{1})V(j_{2}-j_{1})\cdots V(j_{k}-j_{k-1})V(n-j_{k})}
{(n^{2}-j_{1}^{2}+z)\cdots(n^{2}-j_{k}^{2}+z)}\neq 0
\end{equation}
when
\begin{equation}
(n+j_{1}),(j_{2}-j_{1}),\cdots,(j_{k}-j_{k-1}),(n-j_{k})\in \{\pm2, \pm4\}.
\end{equation}
We distinguish three cases to discuss.

{\noindent\bf Case 1.} If $m_{1}> \frac{m_{2}}{2}$, then
\begin{equation}
V(n+j_{1})V(j_{2}-j_{1})\cdots V(j_{k}-j_{k-1})V(n-j_{k})
\end{equation}
is a monomial in $q$ of degree at least
\begin{equation}
\frac{m_{2}}{4}\cdot\Big[|n+j_{1}|+|j_{2}-j_{1}|+\cdots+|j_{k}-j_{k-1}|+|n-j_{k}|\Big].
\end{equation}
The minimum case occurs when
\begin{equation}
(n+j_{1}),(j_{2}-j_{1}),\cdots,(j_{k}-j_{k-1}),(n-j_{k})\in \{\pm4\},
\end{equation}
then
\begin{equation}
(n+j_{1})+(j_{2}-j_{1})+\cdots+(j_{k}-j_{k-1})+(n-j_{k})\in 4\mathbb{Z},
\end{equation}
while
\begin{equation}
(n+j_{1})+(j_{2}-j_{1})+\cdots+(j_{k}-j_{k-1})+(n-j_{k})=2n.
\end{equation}
If $n$ is even, i.e., $n=2m$, $m\in\mathbb{Z}_{>0}$, since
\begin{equation}
\begin{split}
&|n+j_{1}|+|j_{2}-j_{1}|+\cdots+|j_{k}-j_{k-1}|+|n-j_{k}|\\
&\geq (n+j_{1})+(j_{2}-j_{1})+\cdots+(j_{k}-j_{k-1})+(n-j_{k})\\
&=2n=4m,
\end{split}
\end{equation}
we obtain
\begin{equation}
V(n+j_{1})V(j_{2}-j_{1})\cdots V(j_{k}-j_{k-1})V(n-j_{k})
\end{equation}
is a monomial in $q$ of degree at least $m_{2}\cdot m$. Such monomial in $q$ of degree $m_{2}\cdot m$ corresponds to a walk from $-n$ to $n$ with vertices $j_{1}, j_{2}, \cdots, j_{k}\neq \pm n$ and positive steps of length $4$. Thus,
\begin{equation}
\gamma_{2m}=\pm P_{2m}(t)q^{m_{2}m}+O\Big(q^{m_{2}(m-\frac{1}{2})+m_{1}}\Big),
\end{equation}
where
\begin{equation}
P_{2m}(t)q^{m_{2}m}=2\Big(V(4m)+\sum_{k=1}^{\infty} \beta_{k}(2m,z)\Big).
\end{equation}
We have
\begin{equation}
P_{2}(t)q^{m_{2}}=2 V(4)=c q^{m_{2}}
\end{equation}
and
\begin{equation}
\begin{split}
&P_{2m}(t)q^{m_{2}m}=2 \sum_{k=1}^{\infty} \beta_{k}(2m,z)\\
&=2 \cdot \Big(\frac{c}{2}\Big)^{m} \cdot q^{m_{2}m}\cdot \prod_{j=1}^{m-1}\Big((4m^{2}-(-2m+4j)^{2})\Big)^{-1}\\
&=\frac{32\cdot(\frac{c}{2})^{m}\cdot q^{m_{2}m}}{2^{4m}[(m-1)!]^{2}}
\end{split}
\end{equation}
for $m\geq 2$. Therefore,
\begin{equation}
\gamma_{2m}=\left|\frac{32\cdot(\frac{c}{2})^{m}\cdot q^{m_{2}m}}{2^{4m}[(m-1)!]^{2}}\right|+O\Big(q^{m_{2}(m-\frac{1}{2})+m_{1}}\Big).
\end{equation}

If $n$ is odd, i.e., $n=2m-1$, $m\in\mathbb{Z}_{>0}$, since
\begin{equation}
(n+j_{1})+(j_{2}-j_{1})+\cdots+(j_{k}-j_{k-1})+(n-j_{k})=2n=4m-2,
\end{equation}
then
\begin{equation}
V(n+j_{1})V(j_{2}-j_{1})\cdots V(j_{k}-j_{k-1})V(n-j_{k})
\end{equation}
is a monomial in $q$ of degree at least
\begin{equation}
\begin{split}
&\frac{m_{2}}{4}\cdot\Big[|n+j_{1}|+|j_{2}-j_{1}|+\cdots+|j_{k}-j_{k-1}|+|n-j_{k}|\Big]+m_{1}-\frac{m_{2}}{2}\\
&\geq \frac{m_{2}}{4}\cdot(4m-2)+m_{1}-\frac{m_{2}}{2}\\
&=m_{2}(m-1)+m_{1}.
\end{split}
\end{equation}
Such monomial in $q$ of degree $m_{2}(m-1)+m_{1}$ corresponds to a walk from $-n$ to $n$ with vertices $j_{1}, j_{2}, \cdots, j_{k}\neq \pm n$ and positive steps. Specifically, except for one step with length $2$, the others are of length $4$. Thus,
\begin{equation}
\gamma_{2m-1}=\pm P_{2m-1}(t)q^{m_{2}(m-1)+m_{1}}+O\Big(q^{m_{2}(m-\frac{3}{2})+2m_{1}}\Big),
\end{equation}
where
\begin{equation}
P_{2m-1}(t)q^{m_{2}(m-1)+m_{1}}=2\Big(V(4m-2)+\sum_{k=1}^{\infty} \beta_{k}(2m-1,z)\Big).
\end{equation}
We obtain
\begin{equation}
P_{1}(t) q^{m_{1}}=2 V(2)=bq^{m_{1}},
\end{equation}
\begin{equation}
P_{3}(t) q^{m_{1}+m_{2}}=2 \sum_{k=1}^{\infty} \beta_{k}(3,z)=2 \left(\frac{bq^{m_{1}}}{2}\right)\left(\frac{cq^{m_{2}}}{2}\right)\left(\frac{1}{3^{2}-1^{2}}+\frac{1}{3^{2}-1^{2}}\right)
=\frac{bcq^{m_{1}+m_{2}}}{8},
\end{equation}
\begin{equation}
\begin{split}
&P_{5}(t) q^{m_{1}+2m_{2}}=2 \sum_{k=1}^{\infty} \beta_{k}(5,z)\\
&=2 \left(\frac{bq^{m_{1}}}{2}\right)\left(\frac{cq^{m_{2}}}{2}\right)^{2}
\left[\frac{1}{(5^{2}-3^{2})(5^{2}-1^{2})}+\frac{1}{(5^{2}-1^{2})(5^{2}-1^{2})}+\frac{1}{(5^{2}-3^{2})(5^{2}-1^{2})}\right]\\
&=\frac{bc^{2}q^{m_{1}+2m_{2}}}{3^{2}\cdot 2^{6}},
\end{split}
\end{equation}
\begin{equation}
\begin{split}
&P_{2m-1}(t)q^{m_{2}(m-1)+m_{1}}=2 \sum_{k=1}^{\infty} \beta_{k}(2m-1,z)\\
&=2 \left(\frac{c}{2}\right)^{m-1}\cdot \left(\frac{b}{2}\right)\cdot q^{m_{2}(m-1)+m_{1}}\cdot
\Big\{\sum_{i=1}^{m-2} \prod_{j=1}^{i}\Big[(2m-1)^{2}-(-2m+1+4j)^{2}\Big]^{-1}\\
&\cdot \prod_{j=i}^{m-2}\Big[(2m-1)^{2}-(-2m+3+4j)^{2}\Big]^{-1}+\prod_{j=0}^{m-2}\Big[(2m-1)^{2}-(-2m+3+4j)^{2}\Big]^{-1}\\
&+\prod_{j=1}^{m-1}\Big[(2m-1)^{2}-(-2m+1+4j)^{2}\Big]^{-1}\Big\}\\
&=2 \left(\frac{c}{2}\right)^{m-1}\cdot \left(\frac{b}{2}\right)\cdot q^{m_{2}(m-1)+m_{1}}\cdot \frac{8}{2^{3m}}\cdot
\Big\{ \frac{1}{[(2m-3)!!]^{2}}\cdot \sum_{i=1}^{m-2} \frac{(2m-2i-3)!!\cdot (2i-1)!!}{i!\cdot (m-1-i)!} \\
&+ \frac{2}{(2m-3)!!\cdot(m-1)!} \Big\}.
\end{split}
\end{equation}
Hence,
\begin{equation}
\gamma_{1}= |bq^{m_{1}}|+O(q^{2m_{1}-\frac{m_{2}}{2}}),          \ \ \ \gamma_{3}=\frac{|bcq^{m_{1}+m_{2}}|}{8}+O(q^{2m_{1}+\frac{m_{2}}{2}}),
\end{equation}
\begin{equation}
\begin{split}
&\gamma_{2m-1}= \Big|\left(\frac{c}{2}\right)^{m-1}\cdot b\cdot q^{m_{2}(m-1)+m_{1}}\cdot \frac{8}{2^{3m}}\cdot
\Big\{ \frac{1}{[(2m-3)!!]^{2}}\\
& \cdot \sum_{i=1}^{m-2} \frac{(2m-2i-3)!!\cdot (2i-1)!!}{i!\cdot (m-1-i)!}
+ \frac{2}{(2m-3)!!\cdot(m-1)!} \Big\}\Big|+O\Big(q^{m_{2}(m-\frac{3}{2})+2m_{1}}\Big)
\end{split}
\end{equation}
for $m\geq 3$.

{\noindent\bf Case 2.} If $m_{1}< \frac{m_{2}}{2}$, then
\begin{equation}
V(n+j_{1})V(j_{2}-j_{1})\cdots V(j_{k}-j_{k-1})V(n-j_{k})
\end{equation}
is a monomial in $q$ of degree at least
\begin{equation}
\frac{m_{1}}{2}\cdot\Big[|n+j_{1}|+|j_{2}-j_{1}|+\cdots+|j_{k}-j_{k-1}|+|n-j_{k}|\Big].
\end{equation}
The minimum case occurs when
\begin{equation}
(n+j_{1}),(j_{2}-j_{1}),\cdots,(j_{k}-j_{k-1}),(n-j_{k})\in \{\pm2\},
\end{equation}
then
\begin{equation}
(n+j_{1})+(j_{2}-j_{1})+\cdots+(j_{k}-j_{k-1})+(n-j_{k})\in 2\mathbb{Z},
\end{equation}
while
\begin{equation}
(n+j_{1})+(j_{2}-j_{1})+\cdots+(j_{k}-j_{k-1})+(n-j_{k})=2n.
\end{equation}
Since
\begin{equation}
\begin{split}
&|n+j_{1}|+|j_{2}-j_{1}|+\cdots+|j_{k}-j_{k-1}|+|n-j_{k}|\\
&\geq (n+j_{1})+(j_{2}-j_{1})+\cdots+(j_{k}-j_{k-1})+(n-j_{k})\\
&=2n,
\end{split}
\end{equation}
we have
\begin{equation}
V(n+j_{1})V(j_{2}-j_{1})\cdots V(j_{k}-j_{k-1})V(n-j_{k})
\end{equation}
is a monomial in $q$ of degree at least $m_{1}\cdot n$. Such monomial in $q$ of degree $m_{1}\cdot n$ corresponds to a walk from $-n$ to $n$ with vertices $j_{1}, j_{2}, \cdots, j_{k}\neq \pm n$ and positive steps of length $2$. Thus,
\begin{equation}
\gamma_{n}=\pm P_{n}(t) q^{m_{1}n}+O(q^{m_{1}n+m_{2}-2m_{1}}),
\end{equation}
where
\begin{equation}
P_{n}(t) q^{m_{1}n}=2\Big(V(2n)+\sum_{k=1}^{\infty} \beta_{k}(n,z)\Big).
\end{equation}
We deduce
\begin{equation}
P_{1}(t)q^{m_{1}}=2V(2)=bq^{m_{1}}, \ \ \ P_{2}(t)q^{2m_{1}}=2 \left(V(4)+\frac{\Big(\frac{bq^{m_{1}}}{2}\Big)^{2}}{2^{2}}\right)=cq^{m_{2}}+\frac{b^{2}q^{2m_{1}}}{8},
\end{equation}
\begin{equation}
\begin{split}
&P_{n}(t) q^{m_{1}n}=2\sum_{k=1}^{\infty} \beta_{k}(n,z)\\
&=2\cdot\Big(\frac{b}{2}\Big)^{n}\cdot q^{m_{1}\cdot n}\cdot \prod_{j=1}^{n-1}(n^{2}-(-n+2j)^{2})^{-1}\\
&=2\cdot \Big(\frac{b}{2}\Big)^{n}\cdot q^{m_{1}\cdot n}\cdot \prod_{j=1}^{n-1}(n^{2}-(-n+2j)^{2})^{-1}\\
&=2\cdot \frac{(\frac{b}{2})^{n}\cdot q^{m_{1}\cdot n}}{4^{n-1}\cdot[(n-1)!]^{2}}\\
&=\frac{8\cdot b^{n}\cdot q^{m_{1}n}}{2^{3n}\cdot [(n-1)!]^{2}}
\end{split}
\end{equation}
for $n\geq 3$. Therefore,
\begin{equation}
\gamma_{1}=\left|bq^{m_{1}}\right|+O(q^{m_{2}-m_{1}}), \ \ \ \gamma_{2}=\left|cq^{m_{2}}+\frac{b^{2}q^{2m_{1}}}{8}\right|+O(q^{m_{2}}),
\end{equation}
\begin{equation}
\gamma_{n}=\left|\frac{8\cdot b^{n}\cdot q^{m_{1}n}}{2^{3n}\cdot [(n-1)!]^{2}}\right|+O(q^{m_{1}n+m_{2}-2m_{1}})
\end{equation}
for $n\geq 3$.

{\noindent\bf Case 3.} If $m_{1}= \frac{m_{2}}{2}$, then
\begin{equation}
V(n+j_{1})V(j_{2}-j_{1})\cdots V(j_{k}-j_{k-1})V(n-j_{k})
\end{equation}
is a monomial in $q$ of degree
\begin{equation}
\frac{m_{1}}{2}\cdot\Big[|n+j_{1}|+|j_{2}-j_{1}|+\cdots+|j_{k}-j_{k-1}|+|n-j_{k}|\Big].
\end{equation}
Since
\begin{equation}
\begin{split}
&|n+j_{1}|+|j_{2}-j_{1}|+\cdots+|j_{k}-j_{k-1}|+|n-j_{k}|\\
&\geq (n+j_{1})+(j_{2}-j_{1})+\cdots+(j_{k}-j_{k-1})+(n-j_{k})\\
&=2n,
\end{split}
\end{equation}
we have
\begin{equation}
V(n+j_{1})V(j_{2}-j_{1})\cdots V(j_{k}-j_{k-1})V(n-j_{k})
\end{equation}
is a monomial in $q$ of degree at least $m_{1}\cdot n$, and each such monomial of degree $m_{1}\cdot n$ corresponds to a walk from $-n$ to $n$ with vertices $j_{1}, j_{2}, \cdots, j_{k}\neq \pm n$ and positive steps of length $2$ or $4$. The minimum case occurs when $n+j_{1}$, $j_{2}-j_{1}$, $\cdots$, $j_{k}-j_{k-1}$ and $n-j_{k}$ are of the same sign,
while the second smallest degree is for one step of length $2$ with opposite sign. Thus,
\begin{equation}
\gamma_{n}=\pm P_{n}(t) q^{m_{1}n}+O(q^{m_{1}(n+2)}),
\end{equation}
where
\begin{equation}
P_{n}(t) q^{m_{1}n}=2\Big(V(2n)+\sum_{k=1}^{\infty} \beta_{k}(n,z)\Big).
\end{equation}
We obtain
\begin{equation}
P_{1}(t)q^{m_{1}}=2V(2)=bq^{m_{1}}, \ \ \ P_{2}(t)q^{2m_{1}}=2 \left(V(4)+\frac{\Big(\frac{bq^{m_{1}}}{2}\Big)^{2}}{2^{2}}\right)=cq^{m_{2}}+\frac{b^{2}q^{2m_{1}}}{8},
\end{equation}
\begin{equation}
\begin{split}
&P_{n}(t) q^{m_{1}n}=2\sum_{k=1}^{\infty} \beta_{k}(n,z)\\
&=2\cdot P_{n}\Big(\frac{b}{2}\Big)\cdot q^{m_{1}\cdot n}\cdot \prod_{j=1}^{n-1}(n^{2}-(-n+2j)^{2})^{-1}\\
&=2\cdot P_{n}\Big(\frac{b}{2}\Big)\cdot q^{m_{1}\cdot n}\cdot \prod_{j=1}^{n-1}(n^{2}-(-n+2j)^{2})^{-1}\\
&=8\cdot \frac{P_{n}\big(\frac{b}{2}\big)\cdot q^{m_{1}\cdot n}}{2^{2n}\cdot[(n-1)!]^{2}}
\end{split}
\end{equation}
for $n\geq 3$. Therefore,
\begin{equation}
\gamma_{1}=\left|bq^{m_{1}}\right|+O(q^{3m_{1}}), \ \ \ \gamma_{2}=\left|cq^{m_{2}}+\frac{b^{2}q^{2m_{1}}}{8}\right|+O(q^{4m_{1}}),
\end{equation}
\begin{equation}
\gamma_{n}=\left|8\cdot \frac{P_{n}\big(\frac{b}{2}\big)\cdot q^{m_{1}\cdot n}}{2^{2n}\cdot[(n-1)!]^{2}}\right|+O(q^{m_{1}(n+2)})
\end{equation}
for $n\geq 3$, where $P_{n}\big(\frac{b}{2}\big)$ is a polynomial of $\frac{b}{2}$ with degree $n$ and leading coefficient $1$.

Specifically, if $n$ is even, i.e., $n=2m$, $m\in\mathbb{Z}_{>0}$, then
\begin{equation}
(n+j_{1})+(j_{2}-j_{1})+\cdots+(j_{k}-j_{k-1})+(n-j_{k})=4m,
\end{equation}
which implies that each walk from $-2m$ to $2m$ has even number of steps with length $2$. We have
\begin{equation}
P_{2m}\Big(\frac{b}{2}\Big)=\prod_{k=1}^{m}\Big(\Big(\frac{b}{2}\Big)^{2}-x_{k}\Big),
\end{equation}
where $x_{k}$, $k=1,\cdots, m$, depend on $m$. By Lemma \ref{L:2}, we obtain all even gaps closed except the first $k$ if $\big(\frac{b}{2}\big)=-8c\big(k+\frac{1}{2}\big)^{2}$, which yields
\begin{equation}
P_{2m}\Big(\frac{b}{2}\Big)=\prod_{k=1}^{m}\Big(\Big(\frac{b}{2}\Big)^{2}+8c\Big(k-\frac{1}{2}\Big)^{2}\Big).
\end{equation}
Hence,
\begin{equation}
\gamma_{2m}=8\left|\frac{\prod_{k=1}^{m}\Big(\Big(\frac{b}{2}\Big)^{2}+8c\Big(k-\frac{1}{2}\Big)^{2}\Big)\cdot q^{2m_{1}\cdot m}}{2^{4m}\cdot[(2m-1)!]^{2}}\right|+O(q^{2m_{1}(m+1)})
\end{equation}
for $m\geq 2$. If $n$ is odd, i.e., $n=2m-1$, $m\in\mathbb{Z}_{>0}$, then
\begin{equation}
(n+j_{1})+(j_{2}-j_{1})+\cdots+(j_{k}-j_{k-1})+(n-j_{k})=2n=4m-2,
\end{equation}
which implies that each walk from $-2m$ to $2m$ has odd number of steps with length $2$. We have
\begin{equation}
P_{2m-1}\Big(\frac{b}{2}\Big)=\frac{b}{2}\prod_{k=1}^{m-1}\Big(\Big(\frac{b}{2}\Big)^{2}-y_{k}\Big),
\end{equation}
where $y_{k}$, $k=1,\cdots, m-1$, depend on $m$. By Lemma \ref{L:2}, we deduce
\begin{equation}
P_{2m-1}\Big(\frac{b}{2}\Big)=\frac{b}{2}\prod_{k=1}^{m-1}\Big(\Big(\frac{b}{2}\Big)^{2}+8ck^{2}\Big).
\end{equation}
Hence,
\begin{equation}
\gamma_{2m-1}=32\left|\frac{\frac{b}{2}\prod_{k=1}^{m-1}\Big(\Big(\frac{b}{2}\Big)^{2}+8ck^{2}\Big)\cdot q^{m_{1}\cdot (2m-1)}}{2^{4m}\cdot[(2m-2)!]^{2}}\right|+O(q^{m_{1}(2m+1)})
\end{equation}
for $m\geq 2$.

\section{Proof of Theorem \ref{T:2}}
Since $V(\pm 2)=\frac{b}{2}q^{m_{1}}$ and $V(\pm 4)=\frac{c}{2}q^{2m_{1}}$, thus,
\begin{equation}
\|v\|^{2}=\frac{1}{2}\Big(b^{2}q^{2m_{1}}+c^{2}q^{4m_{1}}\Big).
\end{equation}
By Lemma \ref{L:1}, we get
\begin{equation}
\gamma_{n}=2\left|\sum_{x\in X}B_{n}(x,z)\right|\left(1+O\left(\frac{1}{n^{2}}\right)\right),
\end{equation}
where $z=z_{n}$ depends on $n$, but $|z|<1$.

Set $\sigma_{n}=\sum_{\xi\in X^{+}}B_{n}(\xi,0):=\sigma_{n}^{+}+\sigma_{n}^{-}$, where
$\sigma_{n}^{\pm}:=\sum_{\xi:B_{n}(\xi,0)\gtrless 0}B_{n}(\xi,0)$.
When $\xi\in X^{+}$,
\begin{equation}
B_{n}(\xi,0)=\frac{V(x_{1})\cdots V(x_{\nu+1})}{(n^{2}-j_{1}^{2})\cdots(n^{2}-j_{\nu}^{2})},
\end{equation}
where $x_{i}=2$ or $4$ for $i=1, \cdots, \nu+1$.

Note that $X\setminus X^{+}= \bigcup_{\xi\in X^{+}} X_{\xi}$, we choose disjoint sets $X_{\xi}'\subset X_{\xi}$ so that
\begin{equation}
X\setminus X^{+}=\bigcup_{\xi\in X^{+}}X_{\xi}'.
\end{equation}
Then
\begin{equation}
\sum_{x\in X\setminus X^{+}}B_{n}(x,z)=\sum_{\xi\in X^{+}}\left(\sum_{x\in X_{\xi}'}B_{n}(x,z)\right),
\end{equation}
therefore, we have
\begin{equation}
\begin{split}
&\sum_{x\in X}B_{n}(x,z)=\sum_{\xi\in X^{+}}\left(B_{n}(\xi,z)+\sum_{x\in X_{\xi}'}B_{n}(x,z)\right)\\
&=\sum_{\xi:B_{n}(\xi,0)>0}\left(B_{n}(\xi,z)+\sum_{x\in X_{\xi}'}B_{n}(x,z)\right)
+\sum_{\xi:B_{n}(\xi,0)<0}\left(B_{n}(\xi,z)+\sum_{x\in X_{\xi}'}B_{n}(x,z)\right)\\
&:\Sigma=\Sigma^{+}+\Sigma^{-},
\end{split}
\end{equation}
where $\Sigma^{\pm}:=\sum_{\xi:B_{n}(\xi,0)\gtrless 0}\left(B_{n}(\xi,z)+\sum_{x\in X_{\xi}'}B_{n}(x,z)\right)$.

By Lemma \ref{L:3} and Lemma \ref{L:5}, we get there exists a constant $C_{1}>0$ such that
\begin{equation}
\left[1\mp C_{1}\frac{\log n}{n}\right]\sigma_{n}^{\pm}\leq\Sigma^{\pm}\leq \left[1\pm C_{1}\frac{\log n}{n}\right]\sigma_{n}^{\pm},
\end{equation}
which is followed by
\begin{equation}
\label{E:estimation}
\left|\frac{\Sigma}{\sigma_{n}}-1\right|\leq C_{1} \frac{|\sigma_{n}^{-}|+\sigma_{n}^{+}}{|\sigma_{n}|} \cdot \frac{\log n}{n}.
\end{equation}

If $\xi\in X^{+}$, then $V(x_{1})\cdots V(x_{\nu+1})$ is a monomial in $q$ of degree $\frac{m_{1}}{2}\cdot (x_{1}+\cdots+x_{\nu+1})=m_{1}\cdot n$. From Case 3 of Theorem \ref{T:1}, we have
\begin{equation}
\sigma_{2m}=\sum_{\xi\in X^{+}}B_{2m}(\xi,0)=\frac{q^{2m_{1}\cdot m}}{4^{2m-1}\cdot[(2m-1)!]^{2}}\cdot\prod_{k=1}^{m}\left(\left(\frac{b}{2}\right)^{2}+8c\left(k-\frac{1}{2}\right)^{2}\right)
\end{equation}
and
\begin{equation}
\sigma_{2m-1}=\sum_{\xi\in X^{+}}B_{2m-1}(\xi,0)=\frac{q^{m_{1}(2m-1)}}{4^{2m-2}\cdot[(2m-2)!]^{2}}\cdot\frac{b}{2}
\cdot\prod_{k=1}^{m-1}\left(\left(\frac{b}{2}\right)^{2}+8ck^{2}\right).
\end{equation}
Moreover, $\sigma_{2m}\neq 0$ when $\frac{b}{2}\neq 2\sqrt{-2c}\cdot (k-\frac{1}{2})$ and $\sigma_{2m-1}\neq 0$
when $\frac{b}{2}\neq 2\sqrt{-2c} \cdot k$, where $c<0$. So
\begin{equation}
\label{E:upper bound 1}
\frac{|\sigma_{2m}^{-}|+\sigma_{2m}^{+}}{|\sigma_{2m}|}
=\frac{\prod_{k=1}^{m}\left(1-\frac{b^{2}}{8c(2k-1)^{2}}\right)}{\prod_{k=1}^{m}\left|1+\frac{b^{2}}{8c(2k-1)^{2}}\right|}
\leq \frac{\prod_{k=1}^{\infty}\left(1-\frac{b^{2}}{8c(2k-1)^{2}}\right)}{\prod_{k=1}^{\infty}\left|1+\frac{b^{2}}{8c(2k-1)^{2}}\right|}
=\left|\frac{\cosh \left(\frac{b\pi}{4\sqrt{-2c}}\right)}{\cos \left(\frac{b\pi}{4\sqrt{-2c}}\right)}\right|.
\end{equation}
Similarly, we have
\begin{equation}
\label{E:upper bound 2}
\frac{|\sigma_{2m-1}^{-}|+\sigma_{2m-1}^{+}}{|\sigma_{2m-1}|}\leq \left|\frac{\sinh \left(\frac{b\pi}{4\sqrt{-2c}}\right)}{\sin \left(\frac{b\pi}{4\sqrt{-2c}}\right)}\right|.
\end{equation}
By (\ref{E:estimation}), we obtain
\begin{equation}
\sum_{x\in X}B_{2m}(x,z)=\sigma_{2m}\left[1+O\left(\frac{\log m}{m}\right)\right]=\left(\sum_{\xi\in X^{+}}B_{2m}(\xi,0)\right)\left[1+O\left(\frac{\log m}{m}\right)\right]
\end{equation}
and
\begin{equation}
\sum_{x\in X}B_{2m-1}(x,z)=\sigma_{2m-1}\left[1+O\left(\frac{\log m}{m}\right)\right]=\left(\sum_{\xi\in X^{+}}B_{2m-1}(\xi,0)\right)\left[1+O\left(\frac{\log m}{m}\right)\right].
\end{equation}

Notice that
\begin{equation}
\cos \left(\frac{b\pi}{4\sqrt{-2c}}\right)=\prod_{k=1}^{\infty}\left(1+\frac{b^{2}}{8c(2k-1)^{2}}\right)
\end{equation}
and
\begin{equation}
\sin \left(\frac{b\pi}{4\sqrt{-2c}}\right)=\frac{b\pi}{4\sqrt{-2c}}\prod_{k=1}^{\infty}\left(1+\frac{b^{2}}{8c(2k)^{2}}\right),
\end{equation}
then
\begin{equation}
\cos \left(\frac{b\pi}{4\sqrt{-2c}}\right)=\prod_{k=1}^{m}\left(1+\frac{b^{2}}{8c(2k-1)^{2}}\right)\left[1+O\left(\frac{1}{m}\right)\right]
\end{equation}
and
\begin{equation}
\sin \left(\frac{b\pi}{4\sqrt{-2c}}\right)=\frac{b\pi}{4\sqrt{-2c}}\prod_{k=1}^{m-1}\left(1+\frac{b^{2}}{8c(2k)^{2}}\right)\left[1+O\left(\frac{1}{m}\right)\right].
\end{equation}
Hence,
\begin{equation}
\sum_{\xi\in X^{+}}B_{2m}(\xi,0)=\frac{q^{2m_{1}\cdot m}\cdot(-1)^{m}\cdot c^{m}}{2^{3m-2}\cdot[(2m-2)!!]^{2}}\cdot \cos\left(\frac{b\pi}{4\sqrt{-2c}}\right)\cdot \left[1+O\left(\frac{1}{m}\right)\right]
\end{equation}
and
\begin{equation}
\sum_{\xi\in X^{+}}B_{2m-1}(\xi,0)=\frac{q^{m_{1}(2m-1)}\cdot(-1)^{m-1}\cdot c^{m-1}\cdot\sqrt{-2c}}{2^{3m-4}\cdot[(2m-3)!!]^{2}\cdot \pi}
\cdot \sin\left(\frac{b\pi}{4\sqrt{-2c}}\right)\cdot \left[1+O\left(\frac{1}{m}\right)\right].
\end{equation}
Combining (\ref{E:estimation}), (\ref{E:upper bound 1}) and (\ref{E:upper bound 2}), we deduce
\begin{equation}
\begin{split}
&\sum_{x\in X}B_{2m}(x,z)=\left(\sum_{\xi\in X^{+}}B_{2m}(\xi,0)\right)\left[1+O\left(\frac{\log m}{m}\right)\right]\\
&=\frac{q^{2m_{1}\cdot m}\cdot(-1)^{m}\cdot c^{m}}{2^{3m-2}\cdot[(2m-2)!!]^{2}}\cdot \cos\left(\frac{b\pi}{4\sqrt{-2c}}\right)\cdot\left[1+O\left(\frac{\log m}{m}\right)\right]
\end{split}
\end{equation}
and
\begin{equation}
\begin{split}
&\sum_{x\in X}B_{2m-1}(x,z)=\left(\sum_{\xi\in X^{+}}B_{2m-1}(\xi,0)\right)\left[1+O\left(\frac{\log m}{m}\right)\right]\\
&=\frac{q^{m_{1}(2m-1)}\cdot(-1)^{m-1}\cdot c^{m-1}\cdot\sqrt{-2c}}{2^{3m-4}\cdot[(2m-3)!!]^{2}\cdot \pi}
\cdot \sin\left(\frac{b\pi}{4\sqrt{-2c}}\right)\cdot\left[1+O\left(\frac{\log m}{m}\right)\right].
\end{split}
\end{equation}
Therefore,
\begin{equation}
\gamma_{2m}=\frac{q^{2m_{1}\cdot m}\cdot |c|^{m}}{2^{3m-3}\cdot[(2m-2)!!]^{2}}\cdot \left|\cos\left(\frac{b\pi}{4\sqrt{-2c}}\right)\right|\cdot\left[1+O\left(\frac{\log m}{m}\right)\right]
\end{equation}
and
\begin{equation}
\gamma_{2m-1}=\frac{q^{m_{1}(2m-1)}\cdot |c|^{m-1}\cdot\sqrt{-2c}}{2^{3m-5}\cdot[(2m-3)!!]^{2}\cdot \pi}
\cdot \left|\sin\left(\frac{b\pi}{4\sqrt{-2c}}\right)\right|\cdot\left[1+O\left(\frac{\log m}{m}\right)\right].
\end{equation}

\bibliographystyle{amsplain}

\end{document}